\definecolor{mygray}{gray}{0.85}
\renewcommand{\leq}{\leqslant}
\renewcommand{\geq}{\geqslant}
\newcommand{\mrm}[1]{\mathrm{#1}}
\def\subsection{\@startsection{subsection}{3}%
  \z@{.5\linespacing\@plus.7\linespacing}{.3\linespacing}%
  {\bfseries\centering}}
\def\subsubsection{\@startsection{subsubsection}{3}%
  \z@{.5\linespacing\@plus.7\linespacing}{.3\linespacing}%
  {\centering}}
\def\myfnt{\ifx\protect\@typeset@protect\expandafter\footnote\else\expandafter\@gobble\fi}
\newtheorem{theorem}{Theorem}[section]
\newtheorem{corollary}[theorem]{Corollary}
\newtheorem{hypothesis}[theorem]{Hypothesis}
\newtheorem{question}[theorem]{Question}
\newtheorem{mclaim}[theorem]{Major Claim}
\newtheorem{claim}[theorem]{Claim}
\theoremstyle{plain}
\newcommand{\gtp}{\mathrm{ortp}}
\newcommand{\gS}{\mathcal{S}}
\theoremstyle{definition}
\newtheorem{fact}[theorem]{Fact}
\newtheorem{definition}[theorem]{Definition}
\newtheorem{remark}[theorem]{Remark}
\newtheorem{observation}[theorem]{Observation}
\newtheorem{notation}[theorem]{Notation}
\newcommand{\sprec}{\preccurlyeq_{\mathfrak{K}}}
\begin{document}

\begin{abstract}
It is an open problem of Mazari-Armida whether every abstract elementary class of $R$-modules $(\mathbf{K}, \leq_{\mathrm{pure}})$, with $\leq_{\mathrm{pure}}$ the pure submodule relation, is stable. We answer this question in the negative by constructing unstable abstract elementary classes $(\mathbf{K}, \leq_{\mathrm{pure}})$ of torsion-free abelian groups. On the other hand, we prove (in $\mathrm{ZFC}$) that if $R$ is any ring and $(\mathbf{K}, \preccurlyeq)$ is an abstract elementary class of $R$-modules which is $\kappa$-local (also called $\kappa$-tame) for some $\kappa \geq \mathrm{LS}(\mathbf{K}, \preccurlyeq)$, then $(\mathbf{K}, \preccurlyeq)$ is {\em almost stable}, where almost stability is a new notion of independent interest that we introduce in this paper, and which is equivalent to the usual notion of stability under the assumption of amalgamation. As a consequence, assuming the existence of a strongly compact cardinal $\kappa$, we have that every abstract elementary class $(\mathbf{K}, \preccurlyeq)$ of $R$-modules with amalgamation satisfying $\kappa > \mathrm{LS}(\mathbf{K}, \preccurlyeq)$ is stable.
\end{abstract}


\title[On the problem of stability of AECs of modules]{On the problem of stability of abstract elementary classes of modules}


\thanks{No. 1271 on Shelah's publication list. Research of G. Paolini was  supported by project PRIN 2022 ``Models, sets and classifications", prot. 2022TECZJA, and by INdAM Project 2024 (Consolidator grant) ``Groups, Crystals and Classifications''. Research of S. Shelah was supported by SF 2320/23: The Israel Science Foundation (ISF) (2023-2027).}

\author{Gianluca Paolini}
\address{Department of Mathematics ``Giuseppe Peano'', University of Torino, Via Carlo Alberto 10, 10123, Italy.}
\email{gianluca.paolini@unito.it}

\author{Saharon Shelah}
\address{Einstein Institute of Mathematics,  The Hebrew University of Jerusalem, Israel \and Department of Mathematics,  Rutgers University, U.S.A.}
\email{shelah@math.huji.ac.il}

\date{\today}
\maketitle

\setcounter{tocdepth}{1}



\section{Introduction}

Following the development of classification theory for first-order logic \cite{shelah_class}, the second-named author initiated a program aimed at developing an abstract framework for model theory and classification theory. This led to the area of model theory known as Abstract Elementary Classes (AECs) \cite{shelah_abstr_ele_cla}. A longstanding challenge in this theory has been the scarcity of new interesting examples beyond those arising from first-order logic. A significant breakthrough came through Zilber's work on complex exponentiation (cf. \cite{zilber}). More recently, largely due to the work of Mazari-Armida (see e.g. \cite{armida_fuchs, armida2, armida3, armida}), the model theory community has recognized that module theory provides a rich source of applications for the general theory of AECs (see also the recent survey \cite{boney}). A central open problem in this area of model theory is the following question, formulated by Mazari-Armida in \cite{armida_fuchs}.

\begin{question}\label{the_question} Let $R$ be a ring and let $\leq_{\mrm{pure}}$ denote the pure submodule relation. If $(\mathbf{K}, \leq_{\mrm{pure}})$ is an abstract elementary class with $\mathbf{K} \subseteq R$-$\mrm{Mod}$, is $(\mathbf{K}, \leq_{\mrm{pure}})$ stable? Is this true when $R = \mathbb{Z}$? Under what conditions on $R$ does this hold?
\end{question}

We note that this question is inspired by a classical result from model theory: for any ring $R$, every complete first-order theory of $R$-modules is stable. This result, together with the well-known elimination of quantifiers down to $\mrm{pp}$-formulas, makes the first-order model theory of modules one of the most well-behaved areas of application of model theory to algebraic structures. For extensive background on the first-order model theory of modules see e.g. the classical references \cite{prest1, prest2}.
	
\medskip 	In this paper we answer Question~\ref{the_question} in the negative, more precisely, we prove:

\begin{theorem}\label{counterexample_th} There is a class of torsion-free abelian groups $\mathbf{K}$ such \mbox{that $(\mathbf{K}, \leq_{\mrm{pure}})$} is an $\mrm{AEC}$ and $(\mathbf{K}, \leq_{\mrm{p}})$ is unstable, where $\leq_{\mrm{pure}}$ denotes the pure subgroup relation.
	\end{theorem}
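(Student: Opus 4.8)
The plan is to realize an unstable combinatorial configuration inside the category of torsion-free abelian groups with pure embeddings, by encoding (colored) bipartite graphs — or, equivalently, set systems — as torsion-free abelian groups in a way that is faithful enough to compute Galois types but flexible enough to admit the AEC axioms.

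\textbf{The encoding.} For each structure $\Gamma$ in a suitably chosen class of colored bipartite graphs I would attach a torsion-free abelian group $G_\Gamma$, built as a subgroup of a $\mathbb{Q}$-vector space with basis indexed by the vertices of $\Gamma$ together with auxiliary ``witness'' generators, the incidences of $\Gamma$ being recorded by prescribed divisibility conditions. The construction must be arranged so that (i) $G_\Gamma$ is torsion-free; (ii) an induced embedding $\Gamma\hookrightarrow\Gamma'$ induces a pure embedding $G_\Gamma\leq_{\mathrm{pure}}G_{\Gamma'}$, compatibly with directed unions; and (iii) conversely $\Gamma$ is recoverable from $G_\Gamma$ up to isomorphism, and every pure embedding $G_\Gamma\leq_{\mathrm{pure}}G_{\Gamma'}$ between members of the class comes from an induced embedding $\Gamma\hookrightarrow\Gamma'$. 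Requirement (iii) is what forces the divisibility pattern to be chosen delicately: encoding too many incidences through a single prime produces spurious divisibilities — e.g.\ $x_u+x_v,\ x_{u'}+x_v\in pG$ would force $x_u-x_{u'}\in pG$ — which destroys faithfulness and, worse, breaks purity of the submodels one wants. Circumventing this, presumably by spreading incidences over infinitely many primes and/or attaching a countable ``gadget'' to each vertex so that each incidence is recorded internally to a gadget, is the technical heart of the argument, the more so since only countably many primes are available.

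\textbf{The AEC.} Set $\mathbf{K}=\{G:G\cong G_\Gamma\text{ for some }\Gamma\text{ in the chosen class}\}$ with $\leq_{\mathrm{pure}}$ the pure-subgroup relation. Closure under isomorphism is built in; coherence and the chain axioms for $\leq_{\mathrm{pure}}$ hold automatically for pure subgroups of torsion-free groups, once one knows from the recognition statement (iii) that a pure subgroup of $G_{\Gamma'}$ lying in $\mathbf{K}$ has the form $G_\Gamma$ for an induced $\Gamma\subseteq\Gamma'$ and that the chosen class is closed under directed unions of induced subgraphs; and $\mathrm{LS}(\mathbf{K})=\aleph_0$ because the pure closure in $G_{\Gamma'}$ of a subset of size $\le\lambda$ is contained in $G_\Gamma$ for an induced $\Gamma\subseteq\Gamma'$ with $|\Gamma|\le\lambda$.

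\textbf{Instability.} Using (iii), the Galois type over $M=G_\Gamma$ of an element $x_v$ coming from a new vertex $v$ of a one-vertex extension of $\Gamma$ is determined by the neighborhood $S$ of $v$ in $\Gamma$; and distinct neighborhoods $S\neq S'$ yield distinct Galois types, since any $\mathbf{K}$-amalgam identifying the two new elements over $M$ would, by purity of its two legs, be forced both to satisfy and to violate the divisibility witnessing some $u\in S\triangle S'$. Choosing, for each infinite $\lambda\ge\mathrm{LS}(\mathbf{K})$, a graph $\Gamma$ on a vertex set of size $\lambda$ that admits one-vertex extensions realizing more than $\lambda$ distinct neighborhoods over a fixed $M$ of size $\lambda$ (e.g.\ all, or a prescribed combinatorially large family of, subsets of the vertex set), produces more than $\lambda$ Galois types over a model of size $\lambda$. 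Hence $(\mathbf{K},\leq_{\mathrm{pure}})$ is not $\lambda$-stable for any such $\lambda$, i.e.\ it is unstable; note that no amalgamation is invoked, the count being carried out directly, and the argument shows the two types remain distinct even in the transitive closure defining Galois-type equality. This being a class of torsion-free abelian groups, Theorem~\ref{counterexample_th} follows. The main obstacle throughout is the design of the divisibility pattern in the encoding: once it is faithful enough for (iii) yet free enough for the many pure one-vertex extensions, the remaining verifications are bookkeeping.
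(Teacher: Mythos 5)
Your overall strategy---coding set systems by divisibility patterns in torsion-free groups and counting Galois types of a single new element over a canonical base model, explicitly without invoking amalgamation---is the same strategy as the paper's. But there is a genuine gap: the encoding, which you yourself call the technical heart, is never produced. You correctly identify the spurious-divisibility obstacle and only gesture at remedies (``infinitely many primes'', ``a countable gadget per vertex''), whereas the entire content of the theorem is to exhibit a concrete scheme together with a class definition for which (a) the class is closed under unions of pure chains and has countable L\"owenheim--Skolem number, and (b) the coded incidences can be read off invariantly inside every member of the class, so that the types remain distinct after passing to the transitive closure in the definition of Galois-type equality. The paper does this with five fixed primes: the base model is $G_\lambda=\bigoplus_{\alpha<\lambda}R_{p_1}x_\alpha$, a single $p_2$-divisible element $y$ is the coding element, auxiliary $p_3$-divisible elements $z_\alpha$ play the role of your ``gadgets'', $p_4$-divisibility of $x_\alpha+z_\alpha$ ties $z_\alpha$ to $x_\alpha$, and $p_5$-divisibility of $x_\alpha+y+z_\alpha$ records $\alpha\in\mathcal{U}$; the class $\mathbf{K}(\bar p)$ is then cut out by structural conditions on the divisible parts $G[p_\ell]$ ($\aleph_1$-freeness of $G[p_1]$, $G[p_2]$ of rank one with a distinguished generator $a_\star$, a partial map $h$ whose graph is coded by $p_4$-divisibility, and $G[p_5]$ generated by the elements $r(x+a_\star+h(x))$) precisely so that both (a) and (b) hold. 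Designing that scheme is the proof, not bookkeeping.

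Moreover, your requirement (iii)---that $\Gamma$ be recoverable from $G_\Gamma$ and that \emph{every} pure embedding between members of the class be induced by a graph embedding---is both unjustified and stronger than what instability needs; pure embeddings of large torsion-free groups are hard to control at that level of rigidity, and you give no argument that any encoding achieves it. The paper proves no such recognition statement: it only needs that the invariant attached to the pair $(y,G_\lambda)$, namely the set of $\alpha$ for which there exists $z$ with $p_4^\infty\mid(x_\alpha+z)$ and $p_5^\infty\mid(x_\alpha+y+z)$, is preserved and reflected by $\mathbf{K}$-embeddings fixing $G_\lambda$, which is exactly what the structural clauses defining $\mathbf{K}(\bar p)$ are engineered to guarantee. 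Likewise, your claim that the AEC axioms hold ``automatically'' granted (iii) conceals the real verification (closure under unions of pure chains, L\"owenheim--Skolem), which again depends on the specific class. So your proposal is a correct plan at the level of strategy, but as it stands the statement is not proved.
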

	
	At first glance, our results might seem discouraging but this is only half of the story. In fact, we will see that despite the failure of stability, $\mrm{AEC}$s of $R$-modules are still as well-behaved as possible from the point of view of classification theory, in an appropriate sense. We first recall the notion of $\kappa$-locality, a.k.a. $\kappa$-tameness, \cite{394, gross, 932} (notice that most references refer to this notion as $\kappa$-tameness): an $\mrm{AEC}$ $(\mathbf{K}, \preccurlyeq)$ is said to be $\kappa$-local (a.k.a. $\kappa$-tame) if whenever two types from $(\mathbf{K}, \preccurlyeq)$ differ, they already differ on a model of size $\leq \kappa$ (cf. \ref{def_tame}).  In recent years, the notion of $\kappa$-locality (a.k.a. $\kappa$-tameness) has been recognized as central in the study of $\mrm{AEC}$s, and this additional assumption is \mbox{often made in case studies (cf. \cite{tame_surv}).}
	
\smallskip

Now, despite the unstability from Theorem~\ref{counterexample_th}, under the assumption of $\kappa$-locality for some $\kappa \geq \mrm{LS}(\mathfrak{K})$, we establish the next best form of stability possible, namely what we call {\em almost stability} (cf. \ref{def_stability}(2)(3)). This is a notion that we introduce in this paper, which is equivalent to the usual notion of stability under the assumption of amalgamation. In brief, almost stability means that there are only a few orbital types over $M$, once we restrict to a specific strong extension $N$ of $M$ (that is why stability and almost-stability coincide under the assumption of amalgamation). We believe that this notion is of independent interest and we hope that it will inspire future studies and new directions in the theory of $\mrm{AEC}$s.
	
%
	
	\begin{theorem}\label{stability_th} Let $\mathfrak{K}$ be an $\mrm{AEC}$ of $R$-modules s.t. $\mathfrak{K}$ is $\kappa$-local for some $\kappa \geq \mrm{LS}(\mathfrak{K})$. 
	\begin{enumerate}[(1)]
	\item There is $\xi > \kappa$ such that, for every cardinal $\mu$ satisfying
	$$\mu = \mu^{< \xi} + \sum \{2^{2^{\sigma}} : \sigma < \xi\},$$
we have that $\mathfrak{K}$ is {\em almost} $\mu$-stable (cf. \ref{def_stability}(2)(3)).
	\item If in addition $\mathfrak{K}$ has amalgamation and $\mu$ is as in (1), then $\mathfrak{K}$ is $\mu$-stable.
\end{enumerate}
	\end{theorem}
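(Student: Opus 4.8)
The plan is to prove~(1) and then derive~(2) from~(1) together with the equivalence, under amalgamation, between almost stability and stability recorded in~\ref{def_stability} and the discussion around it: if $\mathfrak{K}$ has amalgamation, then every orbital type over $M$ is realized in every sufficiently large strong extension of $M$ (amalgamate a realization over $M$), so the strong extension $N$ in the definition of almost $\mu$-stability imposes no restriction on the orbital types being counted and the bound from~(1) becomes a bound on $|\mathbf{S}(M)|$ itself. So the work is in~(1). I would first dispose of the degenerate cases: if $\mathfrak{K}$ has no model of size $\mu$ there is nothing to prove, and if the sizes of the members of $\mathfrak{K}$ are bounded the relevant $\mu$'s are handled directly; so one may assume $\mathfrak{K}$ has arbitrarily large models. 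Then apply the presentation theorem to fix $L_1 \supseteq L_R$, a first-order $T_1$ and a set $\Gamma$ of $L_1$-types with $|L_1| = |T_1| \leq \mathrm{LS}(\mathfrak{K}) \leq \kappa$ and $|\Gamma| \leq 2^{\mathrm{LS}(\mathfrak{K})}$, presenting $\mathbf{K} = \mathrm{PC}(T_1, \Gamma)$ with $\preccurlyeq$ and the $\mathfrak{K}$-embeddings witnessed by $L_1$-substructure and $L_1$-embedding, and (using arbitrarily large models) a proper Ehrenfeucht--Mostowski template $\Phi$, $|\Phi| \leq \mathrm{LS}(\mathfrak{K})$, with $\mathrm{EM}(I,\Phi)\upharpoonright L_R \in \mathbf{K}$ for every linear order $I$, the assignment $I \mapsto \mathrm{EM}(I,\Phi)$ continuous and $\preccurlyeq$-functorial, and order-automorphisms of $I$ acting as $\mathfrak{K}$-automorphisms. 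Finally, take $\xi > \kappa$ to be a Hanf-type bound attached to $(T_1,\Gamma)$; the point of the hypothesis $\mu = \mu^{<\xi} + \sum\{2^{2^{\sigma}} : \sigma < \xi\}$ is that it delivers $\mu^{<\xi} = \mu$ (so $\mu$ bounds the number of $\preccurlyeq$-submodels of size $\leq\kappa$, and of $\leq\kappa$-tuples, of any member of $\mathbf{K}_{\leq\mu}$) together with $\mu \geq 2^{2^{\sigma}}$ for all $\sigma < \xi$ (so $\mu$ absorbs the EM-model and omitting-types combinatorics carried out below $\xi$).

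The core of the argument: fix $M \in \mathbf{K}_{\leq\mu}$. Using $\Phi$, the L\"owenheim--Skolem axiom, and a chain of length $\leq\mu$, I would build a strong extension $N_{*} \succcurlyeq M$ with $|N_{*}| \leq \mu$ that is \emph{full over the small submodels of $M$}: at each step realize an orbital type over some $M_0 \preccurlyeq M$ with $|M_0| \leq \kappa$ that is still realizable above the current model; since there are at most $\mu^{<\xi}\cdot 2^{\kappa} = \mu$ such pairs $(M_0,q)$, a single pass of length $\leq\mu$ leaves $N_{*}$ with the property that every orbital type over a $\leq\kappa$-submodel of $M$ realizable in \emph{some} $\mathfrak{K}$-extension of $N_{*}$ is already realized in $N_{*}$, while keeping $N_{*}$ inside the EM-skeleton so that it carries the homogeneity of $\Phi$. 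Then the claim is that at most $\mu$ orbital types over $M$ are realizable above $N_{*}$, which is almost $\mu$-stability with witness $N_{*}$. Indeed, by $\kappa$-locality such a type $p$ is determined by the family $(p\upharpoonright M_0)_{M_0\preccurlyeq M,\,|M_0|\leq\kappa}$, and by fullness each $p\upharpoonright M_0$ is realized inside $N_{*}$; now the module structure enters. Since every member of $\mathbf{K}$, after forgetting $L_1\setminus L_R$, is an $R$-module, and the first-order theory of any $R$-module is stable, the coordinate part of each $p\upharpoonright M_0$ — the isomorphism type over $M_0$ of the one-generated submodule it determines — is fixed by some $\theta$ chosen among $\mu^{<\xi}=\mu$ possibilities, and the residual data, which for a general tame AEC could range over $2^{\mu}$ possibilities, is constrained here (by the stability of the first-order theory of modules together with the EM-homogeneity of $N_{*}$) to a coherent family among at most $\sum\{2^{2^{\sigma}}:\sigma<\xi\}=\mu$ possibilities; multiplying gives $\mu\cdot\mu=\mu$. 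This constraint is the crux.

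The main obstacle I expect is precisely the absence of amalgamation. Without it the collection of \emph{all} orbital types over $M$ can genuinely have size $>\mu$ — this is exactly Theorem~\ref{counterexample_th} — so there is no hope of bounding $|\mathbf{S}(M)|$, and the correct statement is the one about the orbital types surviving above a fixed, sufficiently generic $N_{*}$; naming this phenomenon is the role of almost stability. Technically the difficulty concentrates in two places. First, one must build $N_{*}$ of size $\leq\mu$ with enough fullness and homogeneity \emph{without} amalgamation, which forces the use of the template $\Phi$ and the Hanf-number choice of $\xi$ in place of a monster model, and requires care that realizing new small types along the chain does not destroy the realizability of others (which a single pass handles once one checks that realizability above a model is decreasing along the chain). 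Second, and more seriously, one must show that the $\leq\kappa$-local data of the orbital types over $M$ that survive above $N_{*}$ cohere in only $\mu$ ways rather than $2^{\mu}$; this is where the classical stability of the first-order theory of modules — the bound on the number of pp-types, lifted through the presentation — is indispensable, and is exactly what separates AECs of modules from arbitrary $\kappa$-tame AECs, which can be unstable in every cardinal. Once these are in hand, the remaining cardinal arithmetic built into the hypothesis on $\mu$ is routine, and~(2) follows from~(1) as indicated above.
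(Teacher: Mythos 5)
There is a genuine gap, in two places. First, your argument is aimed at the wrong statement: you build one ``sufficiently generic'' extension $N_{*}$ of $M$ and propose to bound the orbital types over $M$ realizable above $N_{*}$, calling this ``almost $\mu$-stability with witness $N_{*}$''. But almost stability in Definition~\ref{def_stability}(2) is a universal statement: for \emph{every} strong extension $N$ of $M$ (as used in Claims~\ref{equivalence_syntactic} and~\ref{almost_stability_claim}, $N$ is an arbitrary member of $\mathbf{K}$ with $M \preccurlyeq N$, of any size), the set $\gS^\gamma_{\mathfrak{K}}(M;N)$ has size at most $\mu$. An existential statement about one carefully constructed $N_{*}$ does not yield this (a type realized in an arbitrary $N$ need not factor through $N_{*}$ in the absence of amalgamation), so the EM-template/fullness construction does not address part (1); it could at best feed into part (2), where amalgamation is available anyway. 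Relatedly, your justification of (2) via ``every orbital type over $M$ is realized in every sufficiently large strong extension of $M$'' is not correct as stated: amalgamation gives realization in \emph{some} extension of a given extension, not in the given one; the actual route is Observation~\ref{def_stability}-adjacent (Observation 2.6 in the paper): almost $(\lambda,\gamma)$-stability and $(\lambda,\gamma)$-stability coincide under amalgamation.

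Second, and more seriously, the step you yourself flag as ``the crux'' --- that the $\kappa$-local data of a type over $M$ can cohere in only $\mu$ ways rather than $2^{\mu}$ --- is exactly the content of the theorem, and the mechanism you invoke (first-order stability of $R$-modules, i.e.\ pp-type counting, ``lifted through the presentation'', plus EM-homogeneity of $N_{*}$) is not an argument and is not sufficient: locality/tameness plus first-order stability of the underlying modules does not by itself prevent $2^{\mu}$-many coherent families (tame AECs can be unstable, and the types here are orbital, not first-order). The paper closes this gap by a different, syntactic route: using the Shelah--Villaveces definability of ``$N \restriction \bar a \preccurlyeq N$'' (Fact~\ref{Villaveces_fact}) it attaches to each orbital type over a $\kappa$-sized submodel a formula $\psi_{\mathfrak{w}} \in \mathfrak{L}_{\lambda^+,\kappa^+}(\tau_{\mathfrak{K}})$ (Major Claim~\ref{main_abstract_claim}), shows under $\kappa$-locality that almost stability is equivalent to syntactic almost stability for the set $\Delta$ of these formulas (Claim~\ref{equivalence_syntactic}), then proves a general ``failure of the syntactic $(\xi,\gamma_*,\Delta^+)$-order property implies syntactic almost $\mu$-stability'' counting theorem in the style of \cite[Ch.~I, Th.~1.10]{300} (Claim~\ref{almost_stability_claim}); the module-theoretic input is then the \emph{infinitary} fact \cite[3.3]{977} that $R$-modules cannot carry the syntactic order property for such infinitary formulas --- not merely the stability of their first-order theories. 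Without an argument of this kind (or a genuine substitute for it), your outline leaves the essential point unproven.
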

	
	We note that an analogous transfer of stability was obtained by Baldwin, Kueker, and VanDieren \cite{BKV06}, who showed that $\omega$-stability implies stability in all cardinals under the assumptions of tameness and locality (in the sense of \cite{BS08}); our Theorem~1.3 obtains a similar conclusion for AECs of modules via a different route, replacing those abstract hypotheses with the syntactic machinery of \cite{977}, where quantifier elimination is extended from first-order positive primitive formulas to positive existential formulas in $\mathfrak{L}_{\infty, \theta}$. Theorem~\ref{stability_th} relies crucially on the results from \cite{977}. Notice that an updated version of \cite{977} can be found on Shelah's archive.


\smallskip By known consistency results on locality (see e.g. \cite{boney_tame}), we deduce:
	
	\begin{corollary} If there is a strongly compact cardinal $\kappa$ and $\mathfrak{K}$ is an $\mrm{AEC}$ of $R$-modules with amalgamation such that $\kappa > \mrm{LS}(\mathfrak{K})$, then $\mathfrak{K}$ is stable.
\end{corollary}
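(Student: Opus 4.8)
The plan is to read the corollary off as an immediate application of Theorem~\ref{stability_th}(2), the only missing ingredient being the locality hypothesis, which is exactly what the large-cardinal tameness results supply. So the first step is to invoke \cite{boney_tame}: if $\kappa$ is strongly compact and $\mathfrak{K}$ is any $\mrm{AEC}$ with $\mrm{LS}(\mathfrak{K}) < \kappa$, then any two distinct orbital types over a model $M$ already differ over some $M_0 \strong M$ with $\|M_0\| < \kappa$. Since strongly compact cardinals are (strong) limit cardinals, ``$\|M_0\| < \kappa$'' entails ``$\|M_0\| \leq \kappa$'', so $\mathfrak{K}$ is $\kappa$-local; and $\kappa \geq \mrm{LS}(\mathfrak{K})$ because $\kappa > \mrm{LS}(\mathfrak{K})$. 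Thus $\mathfrak{K}$ is $\kappa$-local for some $\kappa \geq \mrm{LS}(\mathfrak{K})$. Note this step uses nothing about amalgamation; it is a structural property of the Galois-type functor forced by the reflection properties of $\kappa$.

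Next I would apply Theorem~\ref{stability_th}(2): $\mathfrak{K}$ is $\kappa$-local with $\kappa \geq \mrm{LS}(\mathfrak{K})$ and, by hypothesis, has amalgamation, so there is $\xi > \kappa$ such that $\mathfrak{K}$ is $\mu$-stable whenever $\mu = \mu^{<\xi} + \sum\{2^{2^\sigma} : \sigma < \xi\}$. To conclude that $\mathfrak{K}$ is stable it then remains to observe that this class of $\mu$ is cofinal in the cardinals: writing $\lambda_0 := \sum\{2^{2^\sigma} : \sigma < \xi\}$, the operation $\mu \mapsto \mu^{<\xi}$ has arbitrarily large fixed points above $\lambda_0$ — e.g.\ build $\mu$ as a strictly increasing union of length a regular cardinal $\geq \xi$, so that every map from a cardinal ${<}\xi$ into $\mu$ is bounded — and any such $\mu$ satisfies $\mu = \mu^{<\xi} + \lambda_0$. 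Hence $\mathfrak{K}$ is $\mu$-stable for unboundedly many $\mu$, i.e.\ stable.

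I do not expect a genuine obstacle here: the corollary is Theorem~\ref{stability_th}(2) plus a black box, and the real mathematical content sits in those two inputs rather than in their combination. The only points that deserve a sentence of care are (i) reconciling the conventions, i.e.\ that the $(<\!\kappa)$-tameness produced by \cite{boney_tame} yields $\kappa$-locality in the sense of this paper — immediate, as above, since $\kappa$ is a limit cardinal — and (ii) checking that the cardinal-arithmetic side condition on $\mu$ in Theorem~\ref{stability_th} is satisfiable, so that the stability conclusion is not vacuous. Both are routine cardinal arithmetic.
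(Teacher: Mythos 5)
Your route is the paper's route: the paper offers no separate argument for this corollary, treating it as immediate from Fact~\ref{fact_Boney} together with Theorem~\ref{stability_th}(2), which is exactly your decomposition. Two small remarks on the details. First, no convention-reconciling is needed for locality: Fact~\ref{fact_Boney} is already stated in this paper as ``$\mathfrak{K}$ is $\kappa$-local'', and in any case $({<}\kappa)$-tameness gives $\kappa$-locality trivially because $|M_0|<\kappa$ implies $|M_0|\leq\kappa$; no appeal to $\kappa$ being a (strong) limit is needed. Second, your justification of the cofinality of admissible $\mu$ is not quite right as written: if $\mu=\sup_{i<\delta}\mu_i$ with $\delta=\mathrm{cf}(\delta)\geq\xi$, then boundedness of maps from $\kappa<\xi$ only gives $\mu^{\kappa}=\sum_{i<\delta}\mu_i^{\kappa}$, and without some closure of the approximants the terms $\mu_i^{\kappa}$ need not be $\leq\mu$, so ``any such $\mu$ satisfies $\mu=\mu^{<\xi}+\lambda_0$'' does not follow from what you said. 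The standard fix is one line: take, e.g., $\mu=\beth_\delta(\lambda_0+\xi)$ with $\delta=\mathrm{cf}(\delta)\geq\xi$; then each approximant $\beth_i$ satisfies $\beth_i^{\kappa}\leq 2^{\beth_i}=\beth_{i+1}\leq\mu$ for $\kappa<\xi$, so $\mu^{<\xi}=\mu\geq\lambda_0$, and such $\mu$ are unbounded, which is all the corollary needs.
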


	Notice that $\mrm{AEC}$s of $R$-modules which arise from first-order theories are local and have amalgamation and so our theorem can be seen as the most general form of stability for $R$-modules currently known in the literature. Explicitly, we deduce:

	\begin{corollary} Let $\mathfrak{K} = (\mathbf{K}, \preccurlyeq)$ be such that $\mathbf{K}$ is a complete first-order theory of $R$-modules and $\preccurlyeq$ is the relation of elementary first-order substructure. Then $(\mathbf{K}, \preccurlyeq)$ is stable in the sense of first-order logic.
\end{corollary}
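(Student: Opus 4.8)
The plan is to realize $\mathfrak{K} = (\mathbf{K}, \preccurlyeq)$ as a class satisfying the hypotheses of Theorem~\ref{stability_th}(2) and then to translate the resulting Galois $\mu$-stability back into first-order $\mu$-stability. To begin, I would recall the standard facts about $\mathfrak{K} = (\mrm{Mod}(T), \preccurlyeq)$ for $T$ a complete first-order theory (of $R$-modules, but this plays no real role): it is an $\mrm{AEC}$ with $\mrm{LS}(\mathfrak{K}) = |T| + \aleph_0 =: \lambda$ (Tarski--Vaught for closure under unions of $\preccurlyeq$-chains, and the downward L\"owenheim--Skolem theorem for the $\mrm{LS}$-axiom), and it has the amalgamation property by the elementary amalgamation theorem.

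The key structural input is that $\mathfrak{K}$ is $\lambda$-local. Fixing a monster model $\mathfrak{C}$, for $M \preccurlyeq \mathfrak{C}$ the orbital (Galois) type of a tuple $\bar a$ over $M$ is, by amalgamation, the orbit of $\bar a$ under $\Aut(\mathfrak{C}/M)$, and this orbit is exactly classified by the complete first-order type $\mrm{tp}(\bar a / M)$: two tuples have the same Galois type over $M$ if and only if they have the same first-order type over $M$. Since a complete first-order type over $M$ is determined by its restrictions to the finite (a fortiori $<\lambda^+$-sized) subsets of $M$, distinct Galois types over $M$ already differ over some subset of size $<\lambda^+$, i.e.\ $\mathfrak{K}$ is $\lambda$-local ($=\lambda$-tame). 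Note this holds in $\mathrm{ZFC}$, with no large-cardinal hypothesis, so the detour through the strongly compact corollary above is unnecessary here.

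Now apply Theorem~\ref{stability_th}(2) with $\kappa = \lambda = \mrm{LS}(\mathfrak{K})$: it yields $\xi > \kappa$ such that $\mathfrak{K}$ is (Galois) $\mu$-stable whenever $\mu = \mu^{<\xi} + \sum\{2^{2^{\sigma}} : \sigma < \xi\}$. Fix such a $\mu$; any such $\mu$ automatically satisfies $\mu \geq \xi > \kappa = \lambda = |T| + \aleph_0$ (since $\mu \geq 2^{2^{\sigma}} > \sigma$ for all $\sigma < \xi$). It remains to deduce first-order $\mu$-stability over arbitrary sets. Given $A \subseteq \mathfrak{C}$ with $|A| \leq \mu$, choose $M$ with $A \subseteq M \preccurlyeq \mathfrak{C}$ and $|M| \leq |A| + \lambda \leq \mu$. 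The restriction map $\mathbf{S}^T(M) \to \mathbf{S}^T(A)$ is surjective (lift a type over $A$ by realizing it in $\mathfrak{C}$ and taking its type over $M$), so $|\mathbf{S}^T(A)| \leq |\mathbf{S}^T(M)|$, and by the previous paragraph $|\mathbf{S}^T(M)|$ equals the number of Galois types over $M$, which is $\leq \mu$ by Galois $\mu$-stability. Hence $|\mathbf{S}^T(A)| \leq \mu$; as $A$ was arbitrary of size $\leq \mu$, $T$ is $\mu$-stable, and therefore $T$ is stable in the sense of first-order logic.

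The one step carrying genuine content is the verification of $\lambda$-locality via the identification of Galois types over models with syntactic types over models; everything else is routine bookkeeping --- checking the $\mrm{AEC}$ axioms, recalling elementary amalgamation, and performing the standard transfer of $\mu$-stability from models to arbitrary parameter sets (valid once $\mu \geq |T|$). Conceptually the point of the corollary is not the classical conclusion (Baur--Monk stability of modules) itself, but that it is now subsumed as a degenerate, purely $\mathrm{ZFC}$ instance of the general Theorem~\ref{stability_th}.
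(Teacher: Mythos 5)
Your proposal is correct and follows exactly the route the paper intends: the corollary is deduced from the observation that first-order classes of modules are local (tame, since syntactic types over models determine Galois types by elementary amalgamation) and have amalgamation, so Theorem~\ref{stability_th}(2) yields Galois $\mu$-stability for suitable $\mu$, which transfers to first-order $\mu$-stability and hence stability. Your filling-in of the routine details (LS number, the locality verification, and the transfer from stability over models to stability over arbitrary parameter sets) is accurate.
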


Corollary~1.5 can be seen as a generalization, to the setting of abstract elementary classes, of the classical result independently established by Baur \cite[Theorem~1]{baur} and Fisher \cite{fisher} in 1975, which asserts that any complete first-order theory of $R$-modules is stable, for any ring $R$. Their proof relies in an essential way on the syntactic form of first-order formulas in the language of
$R$-modules; in contrast, the AEC framework considered here, following the groundbreaking work of Mazari-Armida \cite{armida_fuchs, armida2, armida3, armida}, encompasses a considerably larger class of modules, where such syntactic control is no longer available. 
	

	The challenge that we leave for future studies are the following questions.

\begin{question}\label{open_question}
 \begin{enumerate}[(1)]
\item Is there (consistently) a ring $R$ and an $\mrm{AEC}$ of $R$-modules $\mathfrak{K}$ which is not $\kappa$-local for unboundedly many $\kappa$ among the cardinals $\theta$ such that $\theta$ is below the first strongly compact cardinal $> \mrm{LS}(\mathfrak{K})$.
\item Is there (consistently) a ring $R$ and an $\mrm{AEC}$ of $R$-modules $\mathfrak{K}$ which is not $\kappa$-local for unboundedly many $\kappa$ among the cardinals $\theta$ such that $\theta$ is below $\beth_{(2^{\mrm{\mrm{LS}(\mathfrak{K}}})^+}$?
\end{enumerate}
\end{question}

	We conjecture that the answer to \ref{open_question}(2) is yes.

\section{Preliminaries}\label{prel_sec}

	\begin{notation} Given a formula $\varphi$ and ordinals $\alpha, \beta, \gamma$, when we write $\varphi(\bar{x}_\alpha, \bar{y}_\beta, \bar{z}_\gamma)$ we mean that $\bar{x}_\gamma = (x_i : i < \alpha)$,  $\bar{y}_\beta = (y_i : i < \beta)$ and $\bar{z}_\gamma = (z_i : i < \gamma)$. 
\end{notation}

	Concerning the following crucial definition see e.g. \cite[Definition~8.7]{Bal_categoricity}.

	\begin{definition}\label{notation_types} Let $\mathfrak{K} = (\mathbf{K}_{\mathfrak{K}}, \preccurlyeq_{\mathfrak{K}}) = (\mathbf{K}, \preccurlyeq)$ be an $\mrm{AEC}$. Given $(\bar{b}, A, N)$, where $N \in \mathbf{K}$, $A \subseteq N$, and $\bar{b}$ is a sequence in $N$, the orbital type (a.k.a. the Galois type) of $\bar{b}$ over $A$ in $N$, denoted by $\gtp_{\mathfrak{K}}(\bar{b}/A; N)$, is the equivalence class of $(\bar{b}, A, N)$ modulo $E_{\mathfrak{K}}$, where $E_{\mathfrak{K}}$ is the transitive closure of the relation $E_{\mathfrak{K}}^{\mrm{at}}$, where, for $N_1, N_2 \in \mathbf{K}$, $\bar{b}_\ell \in N_\ell$ and $A \subseteq N_1, N_2$, we let $(\bar{b}_1, A_1, N_1)E_{\mathfrak{K}}^{\mrm{at}} (\bar{b}_2, A_2, N_2)$ if $A := A_1 = A_2$, and there exist $N_* \in \mathbf{K}$ and $\mathfrak{K}$-embeddings $f_\ell : N_\ell \rightarrow_A N_*$ for $\ell \in \{1, 2\}$ such that $f_1(\bar{b}_1) = f_2(\bar{b}_2)$.
If $M \in {\mathbf{K}}$ and $\gamma$ is an ordinal, we let $\gS^\gamma_{\mathfrak{K}}(M)$ to be the set:
$$\{\gtp_{\mathfrak{K}}(\bar{b}/M; N) : M \sprec N \in K \text{ and } \bar{b} \in N^\gamma\}.$$
When $\gamma = 1$, we may write $\gS_{\mathfrak{K}}(M)$ instead of $\gS^1_{\mathfrak{K}}(M)$. Finally, we let 
$$\gS^{<\infty}_{\mathfrak{K}}(M) = \bigcup_{\gamma \in \mathrm{OR}} \gS^\gamma_{\mathfrak{K}}(M).$$
\end{definition}

\begin{notation} Let $\mathfrak{K} = (\mathbf{K}_{\mathfrak{K}}, \preccurlyeq_{\mathfrak{K}}) = (\mathbf{K}, \preccurlyeq)$ be an $\mrm{AEC}$. For $\lambda \in \mrm{Card}$, we let 
$$\mathbf{K}_\lambda = \{M \in \mathbf{K} : |M| = \lambda \}.$$
\end{notation}

\begin{definition}\label{def_stability}
Let $\mathfrak{K} = (\mathbf{K}_{\mathfrak{K}}, \preccurlyeq_{\mathfrak{K}}) = (\mathbf{K}, \preccurlyeq)$ be an $\mrm{AEC}$, $\lambda \in \mrm{Card}$ and $\gamma \in \mrm{Ord}$. 
\begin{enumerate}[(1)]
\item We say that $\mathfrak{K}$ is $(\lambda, \gamma)$-stable if for any $M \in \mathbf{K}_\lambda$ we have that $|\gS^\gamma_{\mathfrak{K}}(M)| \leq \lambda$. 
\item We say that $\mathfrak{K}$ is almost $(\lambda, \gamma)$-stable if for any $M, N \in \mathbf{K}$ with $M \sprec N$ and $M \in \mathbf{K}_\lambda$ we have that $|\gS^\gamma_{\mathfrak{K}}(M; N)| \leq \lambda$, where
$$\gS^\gamma_{\mathfrak{K}}(M; N) = \{\gtp(\bar{c}/M, N) : \bar{c} \in N^\gamma\}.$$
\item If $\gamma = 1$, then we simply say (almost) $\lambda$-stable and we write $\gS_{\mathfrak{K}}(M; N)$.
\item $\mathfrak{K}$ is (almost) stable if it is (almost) $\mu$-stable for unboundedly many $\mu \in \mrm{Card}$.
\end{enumerate} 
\end{definition}

	\begin{remark} Notice that in some references (e.g. the recent survey \cite{boney}), stability is defined as follows: $\mathfrak{K}$ is stable if it is $\mu$-stable for some $\mu \in \mrm{Card}$. 
\end{remark}

	\begin{observation} Notice that if $\mathfrak{K}$ has the amalgamation property, then $\mathfrak{K}$ is almost $(\lambda, \gamma)$-stable if and only if it is $(\lambda, \gamma)$-stable. Furthermore, recalling the definition of $E_{\mathfrak{K}}$ and $E^{\mrm{at}}_{\mathfrak{K}}$ from Notation~\ref{notation_types}, we have that $E_{\mathfrak{K}} = E'_{\mathfrak{K}}$. On the other hand, without a global amalgamation hypotheses an extension
$N$ over $M$ can be separately amalgamated over $M$ with two incompatible extensions. Thus, without assuming amalgamation almost $(\lambda, \gamma)$-stability does not imply $(\lambda, \gamma)$-stability.
\end{observation}

\begin{definition}\label{def_tame}
Let $\mathfrak{K} = (\mathbf{K}_{\mathfrak{K}}, \preccurlyeq_{\mathfrak{K}}) = (\mathbf{K}, \preccurlyeq)$ be an $\mrm{AEC}$ and $\kappa$ an infinite cardinal.
\begin{enumerate}[(1)]
\item We say that $\mathfrak{K}$ is $({<}\kappa)$-$\gamma$-local (a.k.a. $({<}\kappa)$-$\gamma$-tame) if for any $M \in \mathbf{K}$ and $p \neq q \in \gS^\gamma_{\mathfrak{K}}(M)$, there exists $M_0 \subseteq M$ such that $|M_0| < \kappa$ and $p \upharpoonright M_0 \neq q \upharpoonright M_0$.
\item When we say that $\mathfrak{K}$ is $\kappa$-$\gamma$-local we mean that $\mathfrak{K}$ is $({<}\kappa^+)$-$\gamma$-local.
\item By $({<}\kappa)$-local we mean $({<}\kappa)$-$1$-local and similarly by $\kappa$-local we mean $\kappa$-$1$-local.
\end{enumerate}
\end{definition}

	\begin{remark} The notion of $\kappa$-locality was used in \cite{394} under the assumption of the amalgamation property, and  in \cite{932} without this assumption. We remark that our notion of $\kappa$-locality corresponds to what is usually called $\kappa$-tameness. Notice that in \cite{BS08}  the term
$\kappa$-locality is reserved for a distinct condition on chains of models.
\end{remark}

Notice that that the syntactic order property presented in \ref{def_order_prop}(2) has already been considered in the literature, see e.g. \cite[2.19]{vasey_2} and references therein.

\begin{definition}\label{def_order_prop}
Let $\mathfrak{K} = (\mathbf{K}_{\mathfrak{K}}, \preccurlyeq_{\mathfrak{K}}) = (\mathbf{K}, \preccurlyeq)$ be an $\mrm{AEC}$, $\lambda \in \mrm{Card}$ and $\gamma \in \mrm{Ord}$. 
\begin{enumerate}[(1)]
\item We say that $\mathfrak{K}$ has the $(\lambda, \gamma)$-order property if there are $M \in \mathbf{K}$ and $(\bar{a}_i : i < \lambda)$ inside $M$ with $\mrm{lg}(\bar{a}_i) = \gamma$, for all $i < \lambda$, such that for any $i_0 < j_0 < \lambda$ and $i_1 < j_1 < \lambda$, $\gtp_{{\mathfrak{K}}}(\bar{a}_{i_0} \bar{a}_{j_0} / \emptyset; N) \neq \gtp_{{\mathfrak{K}}}(\bar{a}_{j_1} \bar{a}_{i_1} / \emptyset; N)$.
\item We say that $\mathfrak{K}$ has the syntactic $(\lambda, \kappa, \gamma, \Delta)$-order property if there are $M \in \mathbf{K}$ and $(\bar{a}_i : i < \lambda)$ inside $M$ with $\mrm{lg}(\bar{a}_i) = \gamma$, for all $i < \lambda$, and contradictory $\varphi_1(\bar{x}_\gamma, \bar{y}_\gamma),  \varphi_2(\bar{x}_\gamma, \bar{y}_\gamma) \in \Delta \subseteq \mathfrak{L}_{\infty, \kappa^+}(\tau_{\mathfrak{K}})$ (e.g. $\varphi_1(\bar{x}_\gamma, \bar{y}_\gamma)$ is $\neg \varphi_2(\bar{x}_\gamma, \bar{y}_\gamma)$) s.t.:
$$i < j < \lambda \Rightarrow M \models \varphi_1(\bar{a}_i, \bar{a}_j) \wedge \varphi_2(\bar{a}_j, \bar{a}_i).$$
	\item If $\kappa = \mrm{LS}(\mathfrak{K})$ we simply say syntactic $(\lambda, \gamma, \Delta)$-order property. Furthermore, if $\gamma = 1$, then we simply say (syntactic) $\lambda$-order ($(\lambda, \Delta)$-order) property.
\item We say that $\mathfrak{K}$ does not have the $\gamma$-order property (resp. syntactic $(\lambda, \Delta)$-order property) if it does not have the $(\mu, \gamma)$-order property (resp. syntactic $(\mu, \gamma, \Delta)$-order property) for some $\mu \in \mrm{Card}$.
\item We define almost $({<}\kappa)$-local and almost $\kappa$-local similarly.
\end{enumerate}
\end{definition}



\begin{fact}[{\cite[Theorem~1.3]{boney_tame}}]\label{fact_Boney} If $\mathfrak{K}$ is an $\mrm{AEC}$ with $\mrm{LS}(\mathfrak{K}) < \kappa$ and $\kappa$ is strongly compact, then
$\mathfrak{K}$ is $\kappa$-local.
\end{fact}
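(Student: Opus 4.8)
The plan, following \cite{boney_tame}, is to reduce to first-order model theory via Shelah's Presentation Theorem and then exploit the ultraproduct machinery that strong compactness makes available. First I would invoke the Presentation Theorem: there are a vocabulary $\tau^+ \supseteq \tau_{\mathfrak{K}}$ with $|\tau^+| = \mrm{LS}(\mathfrak{K}) < \kappa$, a universal first-order $\tau^+$-theory $T^+$, and a set $\Gamma$ of $\tau^+$-types such that $\mathbf{K}$ is exactly the class of $\tau_{\mathfrak{K}}$-reducts of models of $T^+$ omitting every type in $\Gamma$, and such that, whenever $M^+ \subseteq N^+$ are models of $T^+$ omitting $\Gamma$, one has $M^+\!\upharpoonright\!\tau_{\mathfrak{K}} \sprec N^+\!\upharpoonright\!\tau_{\mathfrak{K}}$; the numerical point to record is that each $p \in \Gamma$, being a type in finitely many variables, satisfies $|p| \leq |\tau^+| < \kappa$. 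From strong compactness of $\kappa$ I would then extract: (a) every $\kappa$-directed poset carries a $\kappa$-complete ultrafilter containing all of its cones; (b) a $\kappa$-complete ultraproduct of models of $T^+$ omitting $\Gamma$ is again such a model --- {\L}o\'{s}'s theorem handles $T^+$, and if some $p \in \Gamma$ were realized in the ultraproduct then, by $\kappa$-completeness and $|p| < \kappa$, it would already be realized in a factor. Hence $\kappa$-complete ultraproducts carry $\mathbf{K}$ into $\mathbf{K}$, diagonal maps into $\kappa$-complete ultrapowers induce $\mathfrak{K}$-embeddings, and ultraproducts of $\mathfrak{K}$-embeddings induce $\mathfrak{K}$-embeddings. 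Note that only $\kappa > \mrm{LS}(\mathfrak{K})$ is used here, not $\kappa > 2^{\mrm{LS}(\mathfrak{K})}$, since $|\Gamma|$ never enters.

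Next I would set up the locality argument by contradiction. Fix $M \in \mathbf{K}$ and $p \neq q \in \gS_{\mathfrak{K}}(M)$ with representatives $(\bar a, M, N_p)$ and $(\bar b, M, N_q)$. By L\"owenheim--Skolem (every subset of $M$ of size $<\kappa$ lies in some $M_0 \sprec M$ of size $<\kappa$, as $\mrm{LS}(\mathfrak{K}) < \kappa$), it suffices to find $M_0 \sprec M$ with $|M_0| < \kappa$ and $p\!\upharpoonright\!M_0 \neq q\!\upharpoonright\!M_0$; if $|M| < \kappa$ this is trivial, so assume $|M| \geq \kappa$ and, for contradiction, that $p\!\upharpoonright\!M_0 = q\!\upharpoonright\!M_0$ for every $M_0$ in the $\kappa$-directed poset $I = \{M_0 \sprec M : |M_0| < \kappa\}$. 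Fix a $\kappa$-complete ultrafilter $U$ on $I$ containing all cones, so that $\{M_0 \in I : x \in M_0\} \in U$ for each $x \in M$. For every $M_0 \in I$, the equality $p\!\upharpoonright\!M_0 = q\!\upharpoonright\!M_0$ unwinds --- by the definition of the orbital type as an $E_{\mathfrak{K}}$-class --- into a finite $E'_{\mathfrak{K}}$-chain of $\mathfrak{K}$-embeddings, each fixing $M_0$ pointwise, connecting $(\bar a, M_0, N_p)$ to $(\bar b, M_0, N_q)$. Since the minimal such chain length is $\omega$-valued and $U$ is countably complete, it is constant --- say, equal to $n^*$ --- on a set in $U$, which I may assume is all of $I$.

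Then I would take the $U$-ultraproduct of these length-$n^*$ chains coordinatewise, after expanding everything to $\tau^+$. The endpoints are the $\kappa$-complete ultrapowers $(N_p)^I/U$ and $(N_q)^I/U$, into which $N_p$ and $N_q$ embed diagonally; the intermediate factors yield a length-$n^*$ chain of $\mathfrak{K}$-embeddings between ultraproducts; and the matching conditions along the original chains pass to the ultraproduct. Fineness of $U$ ensures that, for each $m \in M$, the diagonal element $[M_0 \mapsto m]$ is well defined $U$-almost everywhere in every model of the chain and is fixed by every map of the chain, so $M$ embeds compatibly into all the factors. Comparing the relevant orbital types step by step --- pushing forward along the ultraproduct maps, which preserve these diagonal copies of $M$ --- one obtains $\gtp_{\mathfrak{K}}(\bar a/M; N_p) = \gtp_{\mathfrak{K}}(\bar b/M; N_q)$, that is $p = q$, contradicting $p \neq q$.

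The obstacle I expect to dominate is the absence of amalgamation: it forces the orbital type to be a transitive-closure ($E_{\mathfrak{K}}$) class rather than a single-step ($E'_{\mathfrak{K}}$) class, so an ultraproduct of $E'_{\mathfrak{K}}$-chains of \emph{unbounded} length need not be a chain at all. The crux is therefore the observation that countable completeness of $U$ collapses the chain length to a single finite value $n^*$ on a $U$-large set, after which the chains can be ultraproducted coordinatewise and the types compared one step at a time. The only other delicate point is the bookkeeping that keeps $M$ itself rigidly threaded through the construction, which is exactly what fineness of $U$ delivers; the remaining ingredients --- the Presentation Theorem, {\L}o\'{s}'s theorem, preservation of type-omission under $\kappa$-complete ultraproducts, and the functoriality of ultraproducts on $\mathfrak{K}$-embeddings --- are routine.
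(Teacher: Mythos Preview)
The paper does not supply its own proof of this statement: it is recorded as a \emph{Fact} with a bare citation to \cite[Theorem~1.3]{boney_tame}, so there is nothing in the paper to compare against. Your sketch is a faithful outline of Boney's argument in the cited reference --- Presentation Theorem, closure of $\mathbf{K}$ and $\sprec$ under $\kappa$-complete ultraproducts via type-omission of size ${<}\kappa$, and the fine $\kappa$-complete ultrafilter on the small approximations to $M$ --- and you have correctly isolated the one point that requires care in the non-amalgamated setting, namely stabilizing the length of the $E'_{\mathfrak{K}}$-chains on a $U$-large set before taking the coordinatewise ultraproduct.
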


	\begin{definition}\label{def_syntactic_stable} Let $\mathfrak{K} = (\mathbf{K}_{\mathfrak{K}}, \preccurlyeq_{\mathfrak{K}}) = (\mathbf{K}, \preccurlyeq)$ be an $\mrm{AEC}$,  $\Delta \subseteq \mathfrak{L}_{\infty, \kappa^+}(\tau_{\mathfrak{K}})$ and $\gamma < \kappa^+$.
	\begin{enumerate}[(1)]
	\item We say that $\mathfrak{K}$ is syntactically $(\lambda, \kappa, \gamma, \Delta)$-stable \underline{when} for every $M \in \mathbf{K}_\lambda$ we have that $|\mathbf{S}^\gamma_{(\Delta, \mathfrak{K})}(M)| \leq \lambda$, where:
	$$\mathbf{S}^\gamma_{(\Delta, \mathfrak{K})}(M) = \{\mrm{tp}_\Delta(\bar{c}/M; N)\} : N  \in \mathbf{K}, \; M \sprec N, \; \bar{c} \in N^\gamma \}.$$
	\item We say that $\mathfrak{K}$ is syntactically almost $(\lambda, \kappa, \gamma, \Delta)$-stable \underline{when} for every $M, N \in \mathbf{K}_\lambda$ with $M \sprec N$ and $M \in \mathbf{K}_\lambda$ we have that $|\mathbf{S}^\gamma_{(\Delta, \mathfrak{K})}(M; N)| \leq \lambda$, where:
	$$\mathbf{S}^\gamma_{(\Delta, \mathfrak{K})}(M; N) = \{\mrm{tp}_\Delta(\bar{c}/M; N)\} : \bar{c} \in N^\gamma \}.$$
	\item If $\kappa$ is minimal such that $\kappa \geq \mrm{LS}(\mathfrak{K})$ and $\Delta \subseteq \mathfrak{L}_{\infty, \kappa^+}(\tau_{\mathfrak{K}})$, then we may omit $\kappa$.
	\item  If $\Delta = \mathfrak{L}_{\infty, \kappa^+}(\tau_{\mathfrak{K}})$, then we simply say syntactically (almost) $(\lambda, \kappa, \gamma)$-stable.
\end{enumerate}
\end{definition}


	We need the following crucial fact from \cite{1184}. Notice that although the following fact is not explicitly stated in journal version of \cite{1184}, it follows from the proof of the second main theorem (the ``Tarski-Vaught'' criterion for $\mrm{AEC}$s); furthermore, this fact is explicitly stated in the latest arXiv version of the paper \cite[Claim~3.1]{1184}.

\begin{fact}[{\cite{1184}}]\label{Villaveces_fact} Let $\mathfrak{K} = (\mathbf{K}_{\mathfrak{K}}, \preccurlyeq_{\mathfrak{K}}) = (\mathbf{K}, \preccurlyeq)$ be an $\mrm{AEC}$ and let $(\lambda_{\kappa_0}, \kappa_0)$ be as in \cite{1184}, i.e., $\kappa_0 = \mrm{LS}(\mathfrak{K}) + |\tau_{\mathfrak{K}}|$ and $\lambda_{\kappa_0} = \beth_2(\kappa_0)^{++}$. More generally, for $\kappa \geq \mrm{LS}(\mathfrak{K})+ |\tau_{\mathfrak{K}}|$, let $\lambda_{\kappa} = \beth_2(\kappa)^{++}$. 
	Then there is $\varphi^\star_{\kappa}(\bar{x}_{\kappa}) \in \mathfrak{L}_{\lambda^+, \kappa^+}(\tau_{\mathfrak{K}})$ such that:
	\begin{enumerate}[$(\star)$]
	\item if $N \in \mathbf{K}$, $\bar{a} \in N^\kappa$ and $N \restriction \bar{a}$ is a substructure of $N$, then we have:
	$$N \restriction \bar{a} \preccurlyeq N \;\; \Leftrightarrow  \;\; N \models \varphi^\star_{\kappa}(\bar{a}).$$
	\end{enumerate}
\end{fact}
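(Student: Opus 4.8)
The plan is to derive this from Shelah's Presentation Theorem together with a Tarski--Vaught type coherence argument, internalizing everything into $\tau_{\mathfrak{K}}$. Write $\kappa_0 = \mrm{LS}(\mathfrak{K}) + |\tau_{\mathfrak{K}}|$. First I would fix presentation data for $\mathfrak{K}$: a first-order language $\tau^\ast \supseteq \tau_{\mathfrak{K}}$ whose only new symbols are $\le \kappa_0$ finitary function symbols, a \emph{universal} first-order $\tau^\ast$-theory $T^\ast$ with $|T^\ast| \le \kappa_0$, and a set $\Gamma$ of quantifier-free $\tau^\ast$-types in finitely many variables with $|\Gamma| \le 2^{\kappa_0}$, such that $\mathbf{K}$ is exactly the class of $\tau_{\mathfrak{K}}$-reducts of $\tau^\ast$-models of $T^\ast$ omitting $\Gamma$, and such that whenever $M^\ast \subseteq N^\ast$ are $\tau^\ast$-models of $T^\ast$ omitting $\Gamma$ one has $M^\ast\restriction\tau_{\mathfrak{K}} \preccurlyeq N^\ast\restriction\tau_{\mathfrak{K}}$, while conversely $M \preccurlyeq N$ in $\mathbf{K}$ is witnessed by some such pair of expansions.

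With this in hand, the target reduces to the following reformulation: for $N \in \mathbf{K}$ and $\bar a \in N^\kappa$ enumerating a substructure of $N$, one has $N\restriction\bar a \preccurlyeq N$ if and only if there is a $\tau^\ast$-expansion $\hat N$ of $N$ with $\hat N \models T^\ast$, $\hat N$ omitting $\Gamma$, and the range of $\bar a$ closed under the $\tau^\ast$-functions of $\hat N$. Indeed, given such $\hat N$, the restriction $\hat N\restriction\bar a$ is a $\tau^\ast$-substructure of $\hat N$ and hence --- being a substructure of a model of the \emph{universal} theory $T^\ast$ that omits the \emph{quantifier-free} types in $\Gamma$ --- is itself a model of $T^\ast$ omitting $\Gamma$, so $N\restriction\bar a = (\hat N\restriction\bar a)\restriction\tau_{\mathfrak{K}} \preccurlyeq \hat N\restriction\tau_{\mathfrak{K}} = N$; and conversely $N\restriction\bar a \preccurlyeq N$ produces witnessing expansions $M^\ast \subseteq \hat N$ with $M^\ast$ living over $N\restriction\bar a$, which is of the required form.

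It then remains to express the right-hand side of the reformulation by a formula $\varphi^\star_\kappa(\bar x_\kappa) \in \mathfrak{L}_{\lambda^+,\kappa^+}(\tau_{\mathfrak{K}})$. The part of $\hat N$ that ``sees'' $\bar a$ directly is bounded: since $\tau^\ast$ adds only $\le\kappa_0\le\kappa$ finitary functions, their values on tuples from $\bar a$ form a set of size $\le\kappa_0\cdot\kappa^{<\omega}=\kappa$, all forced by the closure requirement into the range of $\bar a$; name them by an existential block $(z_\eta : \eta < \kappa)$, and the requirements ``$(z_\eta)$ is consistent, and the induced $\tau^\ast$-structure on $\bar a$ models $T^\ast$ and omits $\Gamma$'' become quantifier-free conjunctions of size $\le |T^\ast| + |\Gamma|\cdot\kappa \le 2^\kappa$. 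The remaining, decisive conjunct must assert that this partial expansion extends to a global $\tau^\ast$-expansion $\hat N$ of $N$ as above. Since $|N|$ may exceed $\kappa$, one cannot simply quantify over $\hat N$; instead one uses a Tarski--Vaught / coherence clause --- the technical core --- expressing that every size-$\le\kappa$ ``local piece'' of a would-be $\hat N$ extending $(\bar a, z)$ and modeling $T^\ast$ while omitting $\Gamma$ can absorb any further element of $N$, and that such local pieces amalgamate coherently; this clause is built from quantifier blocks of size $\le\kappa$ and conjunctions of size $\le 2^\kappa$ over the data of $T^\ast$ and $\Gamma$. With $\lambda = \beth_2(\kappa)^{++}$ one has $2^\kappa < \lambda$, so indeed $\varphi^\star_\kappa \in \mathfrak{L}_{\lambda^+,\kappa^+}(\tau_{\mathfrak{K}})$; and $(\star)$ follows by checking, in one direction, that witnessing expansions $M^\ast \subseteq \hat N$ provide the $z_\eta$ and all required local pieces, and, in the other, that $N\models\varphi^\star_\kappa(\bar a)$ lets one amalgamate the local pieces --- using universality of $T^\ast$ and quantifier-freeness of $\Gamma$ so that ``models $T^\ast$, omits $\Gamma$'' passes to substructures and to colimits --- into a global $\hat N$ as in the reformulation.

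The step I expect to be the main obstacle is exactly this reduction of ``there exists a global $\tau^\ast$-expansion'' to an $\mathfrak{L}_{\lambda^+,\kappa^+}$-condition: a naive ``locally extendable everywhere'' clause need not force a coherent global expansion, so one must isolate the correct Tarski--Vaught-type condition, and the correct form of the Presentation Theorem (with $T^\ast$ universal and $\Gamma$ quantifier-free), for this to succeed --- this is precisely the content of the ``second main theorem'' of the cited paper. It is also here that one must check that local extendability genuinely captures $\preccurlyeq$-embedding rather than merely membership of $N\restriction\bar a$ in $\mathbf{K}$, which is strictly weaker. The remaining cardinal bookkeeping --- verifying that all conjunctions stay below $\lambda^+$ and all quantifier blocks below $\kappa^+$ --- is then routine.
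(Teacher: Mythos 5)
This statement is quoted in the paper as a \emph{Fact} imported from \cite{1184}: the authors give no proof at all, only the remark that it follows from the proof of the second main theorem there (the ``Tarski--Vaught'' criterion for $\mrm{AEC}$s). So there is no in-paper argument to measure you against; the relevant question is whether your sketch could stand as a self-contained proof. Your general route --- Shelah's Presentation Theorem, the reformulation of $N\restriction\bar a \preccurlyeq N$ as the existence of a $\tau^\ast$-expansion $\hat N\models T^\ast$ omitting $\Gamma$ in which $\mrm{ran}(\bar a)$ is closed under the new function symbols, and the cardinal bookkeeping placing everything in $\mathfrak{L}_{\lambda^+,\kappa^+}(\tau_{\mathfrak{K}})$ --- is the same route as the cited source, and the two easy directions of the reformulation are argued correctly (universality of $T^\ast$ and quantifier-freeness of $\Gamma$ give downward transfer to $\tau^\ast$-substructures; the converse is the ``pair'' form of the Presentation Theorem).

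The genuine gap is exactly the step you flag and then defer: converting ``there exists a global $\tau^\ast$-expansion of $N$ compatible with $\bar a$'' into a formula evaluated at $\bar a$ alone. Since $|N|$ may be arbitrarily large, one cannot quantify over the expansion, and the two obvious repairs both fail or beg the question: fixing one expansion $N^\ast$ of $N$ in advance only witnesses one direction (the expansions witnessing $N\restriction\bar a\preccurlyeq N$ need not be compatible with $N^\ast$, as the presentation is not functorial), while a ``local pieces extend and cohere'' clause needs, for its pieces, the very relation $N'\preccurlyeq N$ for small $N'$ inside the large $N$ --- which is the relation being defined, so a naive Tarski--Vaught clause is circular rather than merely delicate. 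Isolating a condition that is simultaneously expressible in $\mathfrak{L}_{\lambda^+,\kappa^+}(\tau_{\mathfrak{K}})$ with free variables $\bar x_\kappa$ only, sufficient to reassemble a coherent global expansion (or to run the AEC chain and coherence axioms), and necessary, is precisely the content of the second main theorem of \cite{1184}; your proposal states that this is where the difficulty lies but does not supply the condition or its verification, so as a blind proof it establishes only the unproblematic reformulation. To complete it you would have to either reproduce the argument of \cite{1184} or cite it, which is in effect what the paper itself does by listing the statement as a Fact.
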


\section{Almost stability for AECs of $R$-modules}

The aim of this section is to prove Theorem~\ref{stability_th}.

\begin{hypothesis}\label{hyp_general_sec}
	\begin{enumerate}[(1)]
	\item $\mathfrak{K} = (\mathbf{K}_{\mathfrak{K}}, \preccurlyeq_{\mathfrak{K}}) = (\mathbf{K}, \preccurlyeq)$ is a fixed $\mrm{AEC}$. 
	\item $\kappa \geq \mrm{LS}(\mathfrak{K}) + |\tau_{\mathfrak{K}}|$ and $\lambda = \beth_2(\kappa)^{++}$.
	\item $\gamma \leq \kappa$ is an ordinal.
\end{enumerate}
\end{hypothesis}

	\begin{notation}\label{notation_W}
	\begin{enumerate}[(1)]
	\item Let $\mathcal{W}^{\mrm{small}} = \mathcal{W}^{\mrm{small}}_{(\mathfrak{K}, \kappa, \gamma)}$ be the class of quintuples of the form
	$$\mathfrak{u} = (M_\mathfrak{u}, N_\mathfrak{u}, \bar{a}_\mathfrak{u}, \bar{b}_\mathfrak{u}, \bar{c}_\mathfrak{u}) = (M, N, \bar{a}, \bar{b}, \bar{c})$$ such that:
	\begin{enumerate}[(a)]
	\item $M \sprec N$, $|M| \leq |N| \leq \kappa$ (we write ``small'' since we ask $|N| \leq \kappa$ here);
	\item $\bar{a}$ lists $M$ and $\bar{b}$ lists $N$;
	\item $\bar{c} \in N^\gamma$;
	\end{enumerate}
	\item Let $\mathcal{W}^{\mrm{large}} = \mathcal{W}^{\mrm{large}}_{(\mathfrak{K}, \kappa, \gamma)}$ be the class of quadruples of the form 
	$$\mathfrak{u} = (M_\mathfrak{u}, N_\mathfrak{u}, \bar{a}_\mathfrak{u}, \bar{c}_\mathfrak{u}) = (M, N, \bar{a}, \bar{c})$$
such that:
	\begin{enumerate}[(a)]
	\item $M \sprec N$, $|M| \leq \kappa$;
	\item $\bar{a}$ lists $M$;
	\item $\bar{c} \in N^\gamma$.
	\end{enumerate}
	\end{enumerate}
\end{notation}

	\begin{mclaim}\label{main_abstract_claim} In the context of \ref{notation_W}.
	\begin{enumerate}[(A)]
	\item For $\mathfrak{w} = (M_\mathfrak{u}, N_\mathfrak{u}, \bar{a}_\mathfrak{u}, \bar{b}_\mathfrak{u}, \bar{c}_\mathfrak{u}) \in \mathcal{W}^{\mrm{small}}_{(\mathfrak{K}, \kappa, \gamma)}$, there is 
	$$\theta(\bar{z}, \bar{y}_\kappa, \bar{x}_\kappa) = \theta_{\mathfrak{w}}(\bar{z}, \bar{y}_\kappa, \bar{x}_\kappa) \in \mathfrak{L}_{\lambda^+, \kappa^+}(\tau_{\mathfrak{K}})$$ such that:
	\begin{enumerate}[(a)]
	\item if $\mathfrak{w}_1 \cong \mathfrak{w}_2 \in \mathcal{W}^{\mrm{small}}_{(\mathfrak{K}, \kappa, \gamma)}$ are isomorphic (where this means what you expect), then $\theta_{\mathfrak{w}_1}(\bar{z}_\gamma, \bar{y}_\kappa, \bar{x}_\kappa) = \theta_{\mathfrak{w}_2}(\bar{z}_\gamma, \bar{y}_\kappa, \bar{x}_\kappa)$;
	\item if $\mathfrak{w} \in \mathcal{W}^{\mrm{small}}_{(\mathfrak{K}, \kappa, \gamma)}$, \underline{then} $N_{\mathfrak{w}} \models \theta_{\mathfrak{w}}(\bar{c}_{\mathfrak{w}}, \bar{b}_{\mathfrak{w}}, \bar{a}_{\mathfrak{w}})$;
	\item if $\mathfrak{w}_1, \mathfrak{w}_2 \in \mathcal{W}^{\mrm{small}}_{(\mathfrak{K}, \kappa, \gamma)}$ and $\bar{a}_{\mathfrak{w}_1} = \bar{a}_{\mathfrak{w}_2}$ (so $M_{\mathfrak{w}_1} = M_{\mathfrak{w}_2}$), \underline{then}
	 $$\gtp(\bar{c}_{\mathfrak{w}_1}/M_{\mathfrak{w}_1}; N_{\mathfrak{w}_1}) = \gtp(\bar{c}_{\mathfrak{w}_2}/M_{\mathfrak{w}_2}; N_{\mathfrak{w}_2}) \Leftrightarrow \theta_{\mathfrak{w}_1} = \theta_{\mathfrak{w}_2}.$$
	\end{enumerate}
	\item For $\mathfrak{w} = (M_\mathfrak{u}, N_\mathfrak{u}, \bar{a}_\mathfrak{u}, \bar{c}_\mathfrak{u}) \in \mathcal{W}^{\mrm{large}}_{(\mathfrak{K}, \kappa, \gamma)}$, there is 
	$$\psi(\bar{z}_\gamma, \bar{x}_\kappa) = \psi_{\mathfrak{w}}(\bar{z}_\gamma, \bar{x}_\kappa) \in \mathfrak{L}_{\lambda^+, \kappa^+}(\tau_{\mathfrak{K}})$$
	 such that:
	\begin{enumerate}[(a)]
	\item if $\mathfrak{w}_1 \cong \mathfrak{w}_2 \in \mathcal{W}^{\mrm{large}}_{(\mathfrak{K}, \kappa, \gamma)}$ are isomorphic (where this means what you expect), then $\psi_{\mathfrak{w}_1}(\bar{z}_\gamma, \bar{x}_\kappa) = \psi_{\mathfrak{w}_2}(\bar{z}_\gamma, \bar{x}_\kappa)$;
	\item if $\mathfrak{w} \in \mathcal{W}^{\mrm{large}}_{(\mathfrak{K}, \kappa, \gamma)}$, \underline{then} $N_{\mathfrak{w}} \models \psi_{\mathfrak{w}}(\bar{c}_{\mathfrak{w}}, \bar{a}_{\mathfrak{w}})$;
	\item if $\mathfrak{w}_1, \mathfrak{w}_2 \in \mathcal{W}^{\mrm{large}}_{(\mathfrak{K}, \kappa, \gamma)}$ and $M_{\mathfrak{w}_1} = M_{\mathfrak{w}_2}$, \underline{then}
	 $$\gtp(\bar{c}_{\mathfrak{w}_1}/M_{\mathfrak{w}_1}; N_{\mathfrak{w}_1}) = \gtp(\bar{c}_{\mathfrak{w}_2}/M_{\mathfrak{w}_2}; N_{\mathfrak{w}_2}) \Leftrightarrow \psi_{\mathfrak{w}_1} = \psi_{\mathfrak{w}_2}.$$
	\end{enumerate}
	\end{enumerate}
\end{mclaim}

	\begin{proof} We prove clause (A). Given $\mathfrak{m} = (M_\mathfrak{m}, N_\mathfrak{m}, \bar{a}_\mathfrak{m}, \bar{b}_\mathfrak{m}, \bar{c}_\mathfrak{m}) \in \mathcal{W}^{\mrm{small}}_{(\mathfrak{K}, \kappa, \gamma)}$, Let $\theta^0_{\mathfrak{w}}$ be the conjunction of formulas $\varphi(\bar{z}_\gamma \restriction u_3, \bar{y}_\kappa \restriction u_2, \bar{x}_\kappa \restriction u_1)$, where $u_1, u_2$ are finite subsets of $\kappa$, $u_3$ is a finite subset of $\gamma$, $\varphi$ is an atomic formula or the negation of an atomic formula and $N_{\mathfrak{m}} \models \varphi(\bar{c}_\gamma \restriction u_3, \bar{b}_\kappa \restriction u_2, \bar{a}_\kappa \restriction u_1)$. Now, $\theta^0_{\mathfrak{m}}$ satisfies clauses (A)(a)(b) but not necessarily clause (A)(c). We define an equivalence relation $E^{\mrm{small}}_{(\mathfrak{K}, \kappa, \gamma)}$ on $\mathcal{W}^{\mrm{small}}_{(\mathfrak{K}, \kappa, \gamma)}$ by requiring that $\mathfrak{m}_1 E^{\mrm{small}}_{(\mathfrak{K}, \kappa, \gamma)} \mathfrak{m}_2$ if and only there is a mapping $\pi$ such that (for the definition of the image of a type see e.g. \cite[Definition~7.8]{vasey_lec_notes}):
\begin{enumerate}[$(\cdot_1)$]
	\item $\pi(a_{\mathfrak{m}_1, i}) =a_{\mathfrak{m}_2, i}$ is an isomorphism from $M_{\mathfrak{m}_1}$ onto $M_{\mathfrak{m}_2}$ such that $\bar{a}_{\mathfrak{m}_1} \mapsto \bar{a}_{\mathfrak{m}_2}$;
	\item $\gtp(\bar{c}_{\mathfrak{m}_2}/M_{\mathfrak{m}_2}; N_{\mathfrak{m}_2})= \pi(\gtp(\bar{c}_{\mathfrak{m}_1}/M_{\mathfrak{m}_1}; N_{\mathfrak{m}_1}))$.
\end{enumerate}
Notice that for $\mathfrak{m} \in \mathcal{W}^{\mrm{small}}_{(\mathfrak{K}, \kappa, \gamma)}$ the family of formulas
$$\Phi_{\mathfrak{m}} = \{\theta^0_{\mathfrak{m}_1} : \mathfrak{m}_1 E^{\mrm{small}}_{(\mathfrak{K}, \kappa, \gamma)} \mathfrak{m} \}$$
is a set with $\leq 2^\kappa$ members. Lastly, the formula $\theta_{\mathfrak{m}} = \bigvee \Phi_{\mathfrak{m}}$ is as required. 

\smallskip \noindent
We prove clause (B). For every $\mathfrak{w} \in \mathcal{W}^{\mrm{large}}_{(\mathfrak{K}, \kappa, \gamma)}$ we define $\mrm{nb}(\mathfrak{w})$ as follows (where ``$\mrm{nb}$'' stands for ``neighborhood''):
\begin{enumerate}[$(\cdot)$]
	\item $\mathfrak{u} \in \mrm{nb}(\mathfrak{w})$ iff $\mathfrak{u} \in \mathcal{W}^{\mrm{small}}_{(\mathfrak{K}, \kappa, \gamma)}$,  $M_{\mathfrak{u}} = M_{\mathfrak{w}}$, $\bar{a}_{\mathfrak{u}} = \bar{a}_{\mathfrak{w}}$, $\bar{c}_{\mathfrak{u}} = \bar{c}_{\mathfrak{w}}$, $N_\mathfrak{u} \preccurlyeq_{\mathfrak{K}} N_\mathfrak{m}$ and $|N_\mathfrak{u}| \leq \kappa$, so in particular we have that $\mathfrak{u} = (M_\mathfrak{u}, N_\mathfrak{u}, \bar{a}_\mathfrak{u}, \bar{b}_\mathfrak{u}, \bar{c}_\mathfrak{u}) \in \mathcal{W}^{\mrm{small}}_{(\mathfrak{K}, \kappa, \gamma)}$.
\end{enumerate}
Finally, recalling the formula $\varphi^\star_{\kappa}(\bar{x}_{\kappa})$ from \ref{Villaveces_fact}, and noticing that by assumption we have that $\kappa \geq \kappa_0 = \mrm{LS}(\mathfrak{K}) + |\tau_{\mathfrak{K}}|$, we define $\psi_{\mathfrak{w}}(\bar{z}_\gamma, \bar{x}_\kappa)$ as the following formula:
$$\bigvee \{\exists \bar{y}_\kappa (\varphi^\star_{\kappa}(\bar{y}_{\kappa}) \wedge \theta_{\mathfrak{u}}(\bar{z}_\gamma, \bar{y}_\kappa, \bar{x}_\kappa))  : \mathfrak{u} \in \mrm{nb}_{\mathfrak{w}}\},$$
where $\theta_{\mathfrak{u}}(\bar{z}_\gamma, \bar{y}_\kappa, \bar{x}_\kappa)$ is as in clause (B) of this claim. Then $\psi_{\mathfrak{w}}(\bar{z}_\gamma, \bar{x}_\kappa)$ is as desired. 
\end{proof}

	\begin{claim}\label{equivalence_syntactic} Suppose that $\kappa \geq \mrm{LS}(\mathfrak{K}) + |\gamma|$ and that $\mathfrak{K}$ is $\kappa$-$\gamma$-local and let
	$$\Delta = \Delta_{(\mathfrak{K}, \kappa, \gamma)} = \{\psi_{\mathfrak{w}} : \mathfrak{w} \in \mathcal{W}^{\mrm{large}}_{(\mathfrak{K}, \kappa, \gamma)}\},$$
where $\psi_{\mathfrak{w}}$ is as in \ref{main_abstract_claim}(B). \underline{Then}, for every $\mu \in \mrm{Card}$, the following are equivalent:
\begin{enumerate}[(1)]
\item $\mathfrak{K}$ is almost $(\mu, \gamma)$-stable;
\item $\mathfrak{K}$ is syntactically almost $(\mu, \gamma, \Delta)$-stable.
\end{enumerate}
\end{claim}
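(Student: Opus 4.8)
The plan is to show the two notions of almost stability coincide by relating orbital types to the syntactic $\Delta$-types, using the formulas $\psi_{\mathfrak{w}}$ constructed in Major Claim~\ref{main_abstract_claim}(B) as a canonical "code" for orbital types. The key point is that, once $\mathfrak{K}$ is $\kappa$-local, two tuples $\bar{c}_1, \bar{c}_2 \in N^\gamma$ realize the same orbital type over $M$ in $N$ (with $M, N \in \mathbf{K}_\mu$) if and only if they satisfy exactly the same formulas from $\Delta$ over $M$ in $N$. Granting this equivalence, for each fixed pair $M \sprec N$ the map sending an orbital type $\gtp(\bar{c}/M; N)$ to the $\Delta$-type $\mrm{tp}_\Delta(\bar{c}/M; N)$ is a well-defined injection from $\gS^\gamma_{\mathfrak{K}}(M; N)$ into $\mathbf{S}^\gamma_{(\Delta, \mathfrak{K})}(M; N)$, and (trivially, since each $\psi_{\mathfrak{w}}$ is an $\mathfrak{L}_{\lambda^+,\kappa^+}(\tau_{\mathfrak{K}})$-formula) also a surjection-respecting surjection in the other direction — more precisely the two equivalence relations on $N^\gamma$ given by "same orbital type over $M$" and "same $\Delta$-type over $M$" coincide, so the two quotient sets have the same cardinality. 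The equivalence of (1) and (2) then follows immediately from Definition~\ref{def_stability}(2) and Definition~\ref{def_syntactic_stable}(2), quantifying over all $M \sprec N$ in $\mathbf{K}_\mu$.

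First I would prove the forward implication of the type-equality: if $\gtp(\bar{c}_1/M; N_1) = \gtp(\bar{c}_2/M; N_2)$, then there is a common extension $N$ and $\mathfrak{K}$-embeddings $f_\ell : N_\ell \to_M N$ with $f_1(\bar{c}_1) = f_2(\bar{c}_2)$; since each $\psi_{\mathfrak{w}}$ depends only on the isomorphism type of the relevant quintuple/quadruple (clause (a) of \ref{main_abstract_claim}), and satisfaction of $\mathfrak{L}_{\infty,\kappa^+}$-formulas is preserved by $\mathfrak{K}$-embeddings in the usual sense for the positive-existential form of the $\psi$'s, one gets $\mrm{tp}_\Delta(\bar{c}_1/M; N_1) = \mrm{tp}_\Delta(\bar{c}_2/M; N_2)$. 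Conversely, suppose the orbital types differ. By $\kappa$-locality they already differ over some $M_0 \subseteq M$ with $|M_0| < \kappa^+$, i.e. $|M_0| \le \kappa$; enlarging if necessary we may take $|M_0| = \kappa$ (padding with elements of $M$, or noting the $\bar{a}_{\mathfrak{w}}$ have length $\kappa$). Pick witnessing quadruples $\mathfrak{w}_\ell = (M_0, N_\ell', \bar{a}, \bar{c}_\ell) \in \mathcal{W}^{\mrm{large}}$ with a common listing $\bar{a}$ of $M_0$ and $N_\ell' \preccurlyeq N_\ell$ small enough to contain $\bar{c}_\ell$ — here one uses the Löwenheim–Skolem axiom of the AEC to find $N_\ell' \sprec N_\ell$ with $M_0 \cup \bar{c}_\ell \subseteq N_\ell'$ and $|N_\ell'| \le \kappa$. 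Then $\gtp(\bar{c}_1/M_0; N_1') \ne \gtp(\bar{c}_2/M_0; N_2')$, so by \ref{main_abstract_claim}(B)(c) we have $\psi_{\mathfrak{w}_1} \ne \psi_{\mathfrak{w}_2}$, and by \ref{main_abstract_claim}(B)(b) each $N_\ell$ satisfies $\psi_{\mathfrak{w}_\ell}(\bar{c}_\ell, \bar{a})$; it remains to argue that $N_1 \not\models \psi_{\mathfrak{w}_2}(\bar{c}_1, \bar{a})$ (or symmetrically), which is exactly the content of the "$\Leftarrow$" direction of \ref{main_abstract_claim}(B)(c) combined with the fact that satisfaction of $\psi_{\mathfrak{w}_2}$ in a large model reflects down to a small $\mathfrak{K}$-substructure via $\varphi^\star_\kappa$ (this is why $\psi$ was built using Fact~\ref{Villaveces_fact}). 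Hence the $\Delta$-types differ.

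The main obstacle I expect is precisely this last bookkeeping step: verifying that membership of $\psi_{\mathfrak{w}}$ in the $\Delta$-type of $\bar{c}$ over $M$ in a large model $N$ is genuinely equivalent to the orbital-type condition, rather than merely implied by it. The subtlety is that $\psi_{\mathfrak{w}}$ is a disjunction over the neighborhood $\mrm{nb}(\mathfrak{w})$ of small witnesses, and one must check that if $N \models \psi_{\mathfrak{w}}(\bar{c}, \bar{a})$ then the witnessing $\bar{y}_\kappa$ (with $\varphi^\star_\kappa(\bar{y}_\kappa)$, i.e. enumerating a genuine $\mathfrak{K}$-substructure $N' \preccurlyeq N$) together with $\bar{c}$ forms an element of $\mathcal{W}^{\mrm{small}}$ whose associated orbital type over $M_0$ forces $\gtp(\bar{c}/M_0; N) = \gtp(\bar{c}_{\mathfrak{w}}/M_{\mathfrak{w}}; N_{\mathfrak{w}})$ — using that $\theta_{\mathfrak{u}}$ decides the orbital type by \ref{main_abstract_claim}(A)(c). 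This requires carefully tracking the transitive-closure nature of $E_{\mathfrak{K}}$ (types over a small base are "detected" correctly inside any small $\mathfrak{K}$-elementary substructure) and invoking $\kappa$-locality once more to pass back up from $M_0$ to $M$. Everything else — the injectivity/surjectivity count, the reduction of Definition~\ref{def_stability}(2) to Definition~\ref{def_syntactic_stable}(2) — is then a routine translation once the equivalence of the two equivalence relations on $N^\gamma$ is in hand.
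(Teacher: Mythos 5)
Your proposal takes essentially the same route as the paper: both reduce the claim to showing that, for a fixed pair $M \preccurlyeq_{\mathfrak{K}} N$, two tuples of $N^\gamma$ have the same orbital type over $M$ iff they have the same $\Delta$-type over $M$, getting the forward direction from the invariance/preservation properties of the $\psi_{\mathfrak{w}}$ and the converse from $\kappa$-locality (dropping to a small $\preccurlyeq_{\mathfrak{K}}$-substructure of $M$) together with Major Claim~\ref{main_abstract_claim}(B)(b),(c); the ``decoding'' point you single out as the main obstacle --- that $N \models \psi_{\mathfrak{w}}(\bar{c},\bar{a})$ forces the corresponding orbital type, via the $\varphi^\star_{\kappa}$-witness enumerating a genuine small $\preccurlyeq_{\mathfrak{K}} N$ substructure on which $\theta_{\mathfrak{u}}$ pins down the type --- is precisely what the paper also relies on when it writes ``using \ref{main_abstract_claim}(B) we conclude.'' The only adjustment needed is to replace your ``padded'' $M_0$ by a $\preccurlyeq_{\mathfrak{K}}$-substructure $M' \preccurlyeq_{\mathfrak{K}} M$ of cardinality $\kappa$ containing it (via L\"owenheim--Skolem, noting the types still differ over $M'$), since quadruples in $\mathcal{W}^{\mathrm{large}}_{(\mathfrak{K},\kappa,\gamma)}$ require $M_{\mathfrak{w}} \preccurlyeq_{\mathfrak{K}} N_{\mathfrak{w}}$.
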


	\begin{proof}
Assume that $M \in \mathbf{K}_\mu$ and $M \preccurlyeq_{\mathfrak{K}} N$. 
\begin{enumerate}[$(*_1)$]
	\item It suffices to prove $(a) \Leftrightarrow (b)$, where:
	\begin{enumerate}[$(a)$]
	\item $\{\gtp(\bar{b}/M; N) : \bar{b} \in N^\gamma \}$ has cardinality $\leq \mu$;
	\item $\{\mrm{tp}_{\Delta}(\bar{b}/M; N); \bar{b} \in N^\gamma \}$ has cardinality $\leq \mu$.
	\end{enumerate}
\end{enumerate}
In fact we shall prove more. First we observe the following.
\begin{enumerate}[$(*_2)$]
	\item It suffices to prove that, for $\bar{c}_1, \bar{c}_2 \in N^\gamma$,  $(c)_{(\bar{c}_1, \bar{c}_2)} \Leftrightarrow (d)_{(\bar{c}_1, \bar{c}_2)}$, where:
	\begin{enumerate}[$(c)_{(\bar{c}_1, \bar{c}_2)}$]
	\item $\gtp(\bar{c}_1/M; N) = \gtp(\bar{c}_2/M; N)$;
		\end{enumerate}
	\begin{enumerate}[$(d)_{(\bar{c}_1, \bar{c}_2)}$]
	\item $\mrm{tp}_\Delta(\bar{c}_1/M;N) = \mrm{tp}_\Delta(\bar{c}_2/M;N)$.
	\end{enumerate}
	\end{enumerate}
	So we proceed to the proof that for $\bar{c}_1, \bar{c}_2 \in N^\gamma$ we have that  $(c)_{(\bar{c}_1, \bar{c}_2)} \Leftrightarrow (d)_{(\bar{c}_1, \bar{c}_2)}$. To prove the ``left-to-right'' implication, first of all observe that all the formulas in $\Delta$ are formulas in the logic $\mathfrak{L}_{\lambda^+, \kappa^+}(\tau_{\mathfrak{K}})$ and so it suffices to show that for every $M' \preccurlyeq_{\mathfrak{K}} M$ with $|M'| \leq \kappa$ we have that:
\begin{enumerate}[$(d')_{(\bar{c}_1, \bar{c}_2)}$]
	\item $\mrm{tp}_\Delta(\bar{c}_1/M'; N) = \mrm{tp}_\Delta(\bar{c}_2/M'; N)$.
		\end{enumerate}
Now, to show $(d')_{(\bar{c}_1, \bar{c}_2)}$, it suffices to first define appropriate $\mathfrak{w}_1, \mathfrak{w}_2 \in \mathcal{W}^{\mrm{large}}_{(\mathfrak{K}, \kappa, \gamma)}$ so that $\bar{c}_{\mathfrak{m}_1} = \bar{c}_1$, $\bar{c}_{\mathfrak{m}_2} = \bar{c}_2$, $M'_{\mathfrak{m}_1} = M' = M'_{\mathfrak{m}_2}$, $\bar{a}_{\mathfrak{m}_1} = \bar{a} = \bar{a}_{\mathfrak{m}_2}$ and $N_{\mathfrak{m}_1} = N = N_{\mathfrak{m}_1}$, and second to show that if $(c)_{(\bar{c}_1, \bar{c}_2)}$ holds, then $N \models \psi(\bar{c}_1, \bar{a}) \Leftrightarrow N \models \psi(\bar{c}_2, \bar{a})$, and the latter double implication holds by \ref{main_abstract_claim}(B).
Concerning the ``right-to-left'' implication, since by assumption we have that $\mathfrak{K}$ is $\kappa$-$\gamma$-local it suffices to show that for  every $M' \preccurlyeq_{\mathfrak{K}} M$ with $|M'| = \kappa$ we have that:
\begin{enumerate}[$(c')_{(\bar{c}_1, \bar{c}_2)}$]
	\item $\gtp(\bar{c}_1/M'; N) = \gtp(\bar{c}_2/M'; N)$.
		\end{enumerate}
Let $\bar{a} \in (M')^\kappa$ list $M'$ and let, for $\ell =1, 2$, $\mathfrak{m}_\ell = (M', N, \bar{a}, \bar{c}_\ell)$. Then obviously, for $\ell =1, 2$, $\mathfrak{m}_\ell \in \mathcal{W}^{\mrm{large}}_{(\mathfrak{K}, \kappa, \gamma)}$ and thus using \ref{main_abstract_claim}(B) we conclude.
\end{proof}


	\begin{claim}\label{almost_stability_claim} If the conditions (1)-(5) immediately below are met, \underline{then} $\mathfrak{K}$ is syntactically almost $(\mu, \gamma, \Delta_{(\mathfrak{K}, \kappa, \gamma)})$-stable (recall Definition~\ref{def_syntactic_stable}), where:
	\begin{enumerate}[(1)]
	\item $\mrm{LS}(\mathfrak{K}) \leq \kappa$, $\gamma < \kappa^+$ and $\gamma_* = \gamma + \kappa + \kappa$ (notice that $\gamma_* < \kappa^+$);
	\item $\Delta = \Delta_{(\mathfrak{K}, \kappa, \gamma)} = \{ \psi_{\mathfrak{w}}(\bar{z}_\gamma, \bar{x}_\kappa) : \mathfrak{w} \in \mathcal{W}^{\mrm{large}}_{(\mathfrak{K}, \kappa, \gamma)}\}$;
	\item $\xi$ is such that $\mrm{cf}(\xi) > |\Delta|$;
	\item $\mu = \mu^{< \xi} + \sum \{2^{2^\sigma} : \sigma < \xi\}$;
	\item $\mathfrak{K}$ fails the syntactic $(\xi, \gamma_*, \Delta^+)$-order property, for $\Delta^+$ defined as in $(\boxplus)$ below
		\end{enumerate}
\begin{equation*}
\Delta^+ = \{\delta^\ell_{(\mathfrak{m}_1, \mathfrak{m}_2)} : \ell = 1, 2, \text{ and } \psi_{\mathfrak{m}_1}, \psi_{\mathfrak{m}_2} \in \Delta_{(\mathfrak{K}, \kappa, \gamma)} \text{ are contradictory}\}, \tag*{$(\boxplus)$}
\end{equation*}
where for $\mathfrak{m}_1, \mathfrak{m}_2 \in \Delta_{(\mathfrak{K}, \kappa, \gamma)}$ we define
	\begin{enumerate}[(a)]
	\item $\delta^1_{(\mathfrak{m}_1, \mathfrak{m}_2)}(\bar{y}^1_{\gamma_*}, \bar{y}^2_{\gamma_*})$ is the formula
	$$\psi_{\mathfrak{m}_1}((y^1_\alpha : \alpha < \gamma), (y^2_{\gamma + \alpha} : \alpha < \kappa)) \wedge \psi_{\mathfrak{m}_2}((y^1_\alpha : \alpha < \gamma), (y^2_{\gamma + \kappa + \alpha} : \alpha < \kappa))$$
	\item $\delta^2_{(\mathfrak{m}_1, \mathfrak{m}_2)}(\bar{y}^1_{\gamma_*}, \bar{y}^2_{\gamma_*})$ is the formula
	$$\neg \psi_{\mathfrak{m}_1}((y^1_\alpha : \alpha < \gamma), (y^2_{\gamma + \alpha} : \alpha < \kappa)) \vee \neg \psi_{\mathfrak{m}_2}((y^1_\alpha : \alpha < \gamma), (y^2_{\gamma + \kappa + \alpha} : \alpha < \kappa)).$$
\end{enumerate}
\end{claim}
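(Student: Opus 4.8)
The proof will run by contraposition: assuming (1)--(4) together with the failure of the desired conclusion, I will manufacture the syntactic $(\xi,\gamma_*,\Delta^+)$-order property, contradicting (5). This is the classical ``instability yields the order property'' argument, carried out inside the infinitary framework provided by Major Claim~\ref{main_abstract_claim} and Fact~\ref{Villaveces_fact}. So suppose $\mathfrak{K}$ is not syntactically almost $(\mu,\gamma,\Delta)$-stable, where $\Delta=\Delta_{(\mathfrak{K},\nu,\gamma)}$; then I may fix $M\preccurlyeq_{\mathfrak{K}}N$ in $\mathbf{K}_\mu$ and a sequence $\langle\bar{c}_i:i<\mu^+\rangle$ in $N^\gamma$ with pairwise distinct $\mrm{tp}_\Delta(\bar{c}_i/M;N)$, and the task becomes to extract from it a $\xi$-sequence witnessing the order property for $\Delta^+$.

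First I would record a few reductions. From (4) one gets $\xi\le\mu$ (otherwise $\mu^{<\xi}\ge\mu^{|\mu|}=2^\mu>\mu$), hence $\xi<\mu^+=\mrm{cf}(\mu^+)$, so that recursions of length $\xi$ along which a set of size $\mu^+$ is repeatedly thinned to a set of the same size are legitimate; and since failing the $(\xi,\gamma_*,\Delta^+)$-order property implies failing it for $\mrm{cf}(\xi)$ while (2)--(4) survive replacing $\xi$ by $\mrm{cf}(\xi)$, I may assume $\xi$ regular. Also $|\Delta|<\mrm{cf}(\xi)\le\xi\le\mu$ by (3), and by L\"owenheim--Skolem together with (4), over every $M'\preccurlyeq_{\mathfrak{K}}M$ with $|M'|<\xi$ at most $2^{2^{|M'|}}\le\mu$ orbital types of $\gamma$-tuples are realized in $N$. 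Finally, recall from the proof of Major Claim~\ref{main_abstract_claim} that each $\psi_{\mathfrak{w}}\in\Delta$ incorporates the formula $\varphi^\star_\nu$ of Fact~\ref{Villaveces_fact}, so that $N\models\psi_{\mathfrak{w}}(\bar{c},\bar{d})$ expresses precisely: $\bar{d}$ lists an elementary submodel $M'\preccurlyeq_{\mathfrak{K}}N$ isomorphic to $M_{\mathfrak{w}}$ via $\bar{a}_{\mathfrak{w}}\mapsto\bar{d}$, over which $\bar{c}$ realizes the transport of $\gtp(\bar{c}_{\mathfrak{w}}/M_{\mathfrak{w}};N_{\mathfrak{w}})$. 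Hence, for each $i<j$, the inequality of $\Delta$-types yields some $\psi_{ij}\in\Delta$ and a tuple $\bar{d}_{ij}\in M^\nu$ listing an $M_{ij}\preccurlyeq_{\mathfrak{K}}M$ of size $\le\nu$ over which $\bar{c}_i$ and $\bar{c}_j$ realize distinct orbital types, together with a ``direction''.

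The core is then a recursion of length $\xi$ producing: strictly increasing indices $j_\alpha<\mu^+$; elementary submodels $P_\alpha,Q_\alpha\preccurlyeq_{\mathfrak{K}}M$ of size $\le\nu$ with distinguished orbital types $p_\alpha$ over $P_\alpha$ and $q_\alpha$ over $Q_\alpha$; and a decreasing chain of ``reservoirs'' $I_\alpha\subseteq\mu^+$ with $|I_\alpha|=\mu^+$ and $\sup_{\gamma<\alpha}j_\gamma<\min I_\alpha$; subject to the ``half-graph'' invariant that, for every $\gamma<\alpha$, each $\bar{c}_{j_\delta}$ with $\delta<\gamma$ realizes $p_\gamma$ over $P_\gamma$ and $q_\gamma$ over $Q_\gamma$, while no $\bar{c}_i$ with $i\in I_\alpha$ realizes both $p_\gamma$-over-$P_\gamma$ and $q_\gamma$-over-$Q_\gamma$ simultaneously. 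At stage $\beta$ one puts $j_\beta:=\min I_\beta$, and then, using the distinctness of the $\Delta$-types together with the bounds recorded above, selects $P_\beta,Q_\beta,p_\beta,q_\beta$ so that (a) the whole prefix $\langle\bar{c}_{j_\delta}:\delta<\beta\rangle$ is ``absorbed'' into this standard pair of types, and (b) $\mu^+$-many $i\in I_\beta$ with $i>j_\beta$ violate it; one then lets $I_{\beta+1}$ consist of such $i$ on which moreover $\bigl(\gtp(\bar{c}_i/P_\beta;N),\gtp(\bar{c}_i/Q_\beta;N)\bigr)$ is constant (possible because these pairs range over $<\mu^+$ values). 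Limit stages are handled by intersecting reservoirs, which stay of size $\mu^+$ since $|\beta|<\xi\le\mu<\mrm{cf}(\mu^+)$ and each earlier step removed fewer than $\mu^+$ elements. I expect step (a)$+$(b) to be the main obstacle: reconciling ``homogeneous over $(P_\beta,Q_\beta)$ for the entire prefix'' with ``non-homogeneous over a $\mu^+$-sized part of the reservoir'' is exactly what forces a \emph{pair} of small submodels rather than one, which is the structural source of $\gamma_*=\nu+\nu+\gamma$ and of the Boolean combinations $\delta^\ell_{(\mathfrak{m}_1,\mathfrak{m}_2)}$ in $(\boxplus)$, and it is here that the full strength of $\mu\geq2^{2^\sigma}$ for all $\sigma<\xi$ (bounding the orbital types over the growing parameter sets $\bigcup_{\gamma<\beta}(P_\gamma\cup Q_\gamma)$) and of $\mu^{<\xi}=\mu$ gets used.

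To conclude, the map sending $\alpha<\xi$ to the pair $(\psi^P_\alpha,\psi^Q_\alpha)\in\Delta^2$ of Major Claim~\ref{main_abstract_claim}(B)-formulas coding ``realizes $p_\alpha$ over a copy of $P_\alpha$'' and ``realizes $q_\alpha$ over a copy of $Q_\alpha$'' takes at most $|\Delta|^2<\mrm{cf}(\xi)$ values, so since $\xi$ is regular it is constant, say equal to $(\psi_{\mathfrak{m}_1},\psi_{\mathfrak{m}_2})$ with $\mathfrak{m}_1,\mathfrak{m}_2\in\mathcal{W}^{\mrm{large}}_{(\mathfrak{K},\nu,\gamma)}$, on some $S\subseteq\xi$ of order type $\xi$; a harmless extra demand on the construction (for instance that $P_\beta$ and $Q_\beta$ always carry incompatible atomic types) ensures $\psi_{\mathfrak{m}_1},\psi_{\mathfrak{m}_2}$ are contradictory, so that $\delta^1_{(\mathfrak{m}_1,\mathfrak{m}_2)},\delta^2_{(\mathfrak{m}_1,\mathfrak{m}_2)}\in\Delta^+$ and these two formulas are themselves contradictory. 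Setting $\bar{a}_\alpha:=\bar{c}_{j_\alpha}\,{}^{\frown}\bar{p}_\alpha\,{}^{\frown}\bar{q}_\alpha\in N^{\gamma_*}$ for $\alpha\in S$ (with $\bar{p}_\alpha,\bar{q}_\alpha$ listing $P_\alpha,Q_\alpha$), the ``homogeneous prefix'' clause of the invariant (with $\gamma=\beta$, $\delta=\alpha$) together with Major Claim~\ref{main_abstract_claim}(B) gives $N\models\delta^1_{(\mathfrak{m}_1,\mathfrak{m}_2)}(\bar{a}_\alpha,\bar{a}_\beta)$ for $\alpha<\beta$ in $S$, while the ``reservoir'' clause (with $\gamma=\alpha$, $i=j_\beta$) gives that at least one of $\psi_{\mathfrak{m}_1}(\bar{c}_{j_\beta},\bar{p}_\alpha)$, $\psi_{\mathfrak{m}_2}(\bar{c}_{j_\beta},\bar{q}_\alpha)$ fails, i.e.\ $N\models\delta^2_{(\mathfrak{m}_1,\mathfrak{m}_2)}(\bar{a}_\beta,\bar{a}_\alpha)$. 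Thus $\langle\bar{a}_\alpha:\alpha\in S\rangle$ together with $\delta^1_{(\mathfrak{m}_1,\mathfrak{m}_2)},\delta^2_{(\mathfrak{m}_1,\mathfrak{m}_2)}$ witnesses the syntactic $(\xi,\gamma_*,\Delta^+)$-order property, contradicting (5); hence $\mathfrak{K}$ is syntactically almost $(\mu,\gamma,\Delta)$-stable.
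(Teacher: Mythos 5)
Your overall plan (contrapositive: $\mu^+$ pairwise distinct $\Delta$-types yield the syntactic $(\xi,\gamma_*,\Delta^+)$-order property) is in the right spirit, but the central step of your global recursion is precisely the point that needs an argument, and you leave it unproved. At stage $\beta$ you must produce a single pair $(P_\beta,p_\beta),(Q_\beta,q_\beta)$ such that \emph{every} element of the already-built prefix realizes both types while $\mu^+$-many reservoir elements fail at least one. Nothing in hypotheses (1)--(4) guarantees such a pair exists: the prefix elements have pairwise distinct $\Delta$-types over $M$, so their mutual disagreements are witnessed over various small submodels, and it can perfectly well happen that every $(P,Q)$ over which the whole prefix is homogeneous is one over which all but $\leq\mu$ of the remaining $\bar{c}_i$ share the prefix's common types (the reservoir elements can maintain their pairwise distinctness exclusively over submodels where the prefix is \emph{not} homogeneous). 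The cardinal hypotheses $\mu^{<\xi}=\mu$ and $\mu\geq 2^{2^\sigma}$ do not manufacture such a pair; in the paper they serve a different purpose (see below). Two further slips: the reduction ``WLOG $\xi$ is regular'' is invalid, since failing the order property at length $\xi$ does not imply failing it at the shorter length $\mrm{cf}(\xi)$ (hypothesis (5) is not preserved); and the contradictoriness of the pair $(\psi_{\mathfrak{m}_1},\psi_{\mathfrak{m}_2})$, needed for membership in $\Delta^+$, is not ensured by a ``harmless extra demand'' on $P_\beta,Q_\beta$ -- your pair codes types over two different bases that are simultaneously realized by the prefix, so contradictoriness has to be argued, not postulated.

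The paper's proof avoids any homogeneity demand on a growing prefix by running, for each $\alpha<\mu^+$ \emph{separately}, a maximal ``splitting chain'': at stage $i$ it chooses $\bar{c}^\alpha_i$ together with two tuples $\bar{a}^\alpha_{(i,1)},\bar{a}^\alpha_{(i,2)}\in M^\nu$ at which $\bar{c}^\alpha_i$ satisfies two \emph{contradictory} formulas of $\Delta$, subject to: the new $\bar{c}^\alpha_i$ satisfies all earlier chosen instances (clause (f)), and the earlier $\bar{c}^\alpha_j$ do not distinguish the new pair of tuples (clause (g)). This yields a dichotomy. If some chain reaches length $\xi$, one stabilizes the formula pair using $\mrm{cf}(\xi)>|\Delta|$ and reads off the $(\xi,\gamma_*,\Delta^+)$-order property directly from (f) and (g), contradicting (5). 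If every chain stops before $\xi$, the paper does \emph{not} attempt to extract an order at all: it uses $\mu^{<\xi}=\mu$ to find $\mu^+$-many $\alpha$ sharing the same chain data, and then maximality of the chains shows that the $\Delta$-type of each such $\bar{c}_\alpha$ over $M$ is determined by an equivalence relation on suitable tuples of $M^\nu$ with at most $2^{2\times i(\alpha_*)}$ classes, so at most $2^{2^{|i(\alpha_*)|+\nu+|\Delta|}}\leq\mu$ such types occur -- the desired contradiction, and this is exactly where $\mu\geq\sum\{2^{2^\sigma}:\sigma<\xi\}$ is used. This dichotomy-plus-counting structure is the missing idea in your proposal; a single greedy recursion with reservoirs cannot replace it unless you actually prove your step (a)$+$(b), which you do not.
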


	\begin{proof} So we are given $M \in \mathbf{K}_\mu$ and $M \preccurlyeq_{\mathfrak{K}} N \in \mathbf{K}$ and we want to prove that
		$$|\{\mrm{tp}_\Delta(\bar{c}_{}/M; N)\} : \bar{c} \in N^\gamma \}| \leq \mu.$$
\begin{enumerate}[$(\star_0)$] 
\item Toward contradiction, for $\alpha < \mu^+$, let $\bar{c}_\alpha \in N^\gamma$ be such that the types $(\mrm{tp}_\Delta(\bar{c}_{\alpha}/M; N) : \alpha < \mu^+)$ are pairwise distinct. 
\end{enumerate}
Recall that $M$ and $N$ are fixed, but first we observe:
	\begin{enumerate}[$(\star_1)$]
	\item W.l.o.g. we can assume that if $\alpha < \mu^+$ and $p(\bar{z}_\gamma) \subseteq \mrm{tp}_\Delta(\bar{c}_\alpha/M; N)$ has cardinality $< \xi$, \underline{then} $p(\bar{z}_\gamma)$ is realized in $M$.
\end{enumerate}
[Why? As by assumption $\mu = \mu^{< \xi}$, $\kappa < \xi$, $|\Delta| \leq 2^\kappa$, and so clearly $\bigcup \{\mrm{tp}_\Delta(\bar{c}_\alpha/M; N) : \alpha < \mu^+ \}$ (which is simply a set of formulas) has size $\leq \mu$. Notice that of course we can replace $M$ by $M'$ if $M \preccurlyeq_{\mathfrak{K}} M' \preccurlyeq_{\mathfrak{K}} N$ and $|M'| =\mu$, so $\Gamma = \{p : p \text{ as in $(\star_1)$} \}$ has cardinality $\leq \mu$. For each $p \in \Gamma$, choose $\alpha_p < \mu^+$ such that $p \subseteq \mrm{tp}_\Delta(\bar{c}_{\alpha_p}/M; N)$. Let $\alpha = \sup\{\alpha_p : p \in \Gamma\}$, so $\alpha < \mu^+$ and $M' = M_{\alpha+1}$ is as required.]
\begin{enumerate}[$(\star_2)$]
	\item For each $\alpha < \mu^+$,
we try to choose $(\bar{a}^\alpha_{(i, 1)}, \bar{a}^\alpha_{(i, 2)}, \bar{c}^\alpha_i, \mathfrak{m}^\alpha_{(i, 1)}, \mathfrak{m}^\alpha_{(i, 2)},  \psi^\alpha_{(i, 1)}, \psi^\alpha_{(i, 2)})$, by induction on $i < \xi$, such that the following happens:
	\begin{enumerate}[(a)]
	\item $\bar{a}^\alpha_{(i, \ell)} \in M^{\kappa}$, for $\ell = 1, 2$;
	\item for $\ell = 1, 2$, $\psi^\alpha_{(i, \ell)}(\bar{z}_\gamma, \bar{x}_\kappa) = \psi_{\mathfrak{m}^\alpha_{(i, \ell)}}(\bar{z}_\gamma, \bar{x}_\kappa)$, where
	$$\mathfrak{m}^\alpha_{(i, \ell)} = (M \restriction \bar{a}^\alpha_{(i, \ell)}, N, \bar{a}^\alpha_{(i, \ell)}, \bar{c}^\alpha_i) \in \mathcal{W}^{\mrm{large}}_{(\mathfrak{K}, \kappa, \gamma)};$$
	\item $N \models \psi^\alpha_{(i, \ell)}(\bar{c}^\alpha_i, \bar{a}^\alpha_{(i, \ell)})$, for $\ell = 1, 2$;
	\item $\psi^\alpha_{(i, 1)}$ and $\psi^\alpha_{(i, 2)}$ are contradictory and they are both from $\Delta$;
	\item $\bar{c}^\alpha_i \in M^\gamma$;
	\item if $\ell = 1, 2$ and $j \leq i < \xi$, then
	$$N \models \psi^\alpha_{(j, \ell)}(\bar{c}^\alpha_i, \bar{a}^\alpha_{(j, \ell)}) \wedge \psi^\alpha_{(j, \ell)}(\bar{c}_\alpha, \bar{a}^\alpha_{(j, \ell)})$$
	\item if $\ell = 1, 2$ and $j < i < \xi$, then we have that
	$$N \models \psi^\alpha_{(j, \ell)}(\bar{c}^\alpha_j, \bar{a}^\alpha_{(i, 1)}) \leftrightarrow \psi^\alpha_{(j, \ell)}(\bar{c}^\alpha_j, \bar{a}^\alpha_{(i, 2)}).$$
	\end{enumerate}
\end{enumerate}
\begin{enumerate}[$(\star_3)$]
	\item Let $i(\alpha)$ be the minimal $i \leq \xi$ such that the induction from $(\star_2)$ stops, so $i(\alpha) \leq \xi$ (recall that the induction from $(\star_2)$ is on $i < \xi$).
\end{enumerate}
\begin{enumerate}[$(\star_4)$]
	\item If for some $\alpha < \mu^+$ we have that $i(\alpha) = \xi$, then we get a contradiction to the assumption (4) which says that $\mathfrak{K}$ fails the syntactic $(\xi, \gamma_*, \Delta^+)$-order property.
\end{enumerate}
Why $(\star_4)$? Suppose that the assumption of $(\star_4)$ holds, i.e., $i(\alpha) = \xi$.  As by assumption we have that $\mrm{cf}(\xi) > |\Delta|$, then, for some $\psi_1, \psi_2$, the order type of $\mathcal{U}$ is equal to $\xi$, where:
$$\mathcal{U} = \{i < \xi : (\psi_1, \psi_2) = (\psi^\alpha_{(i, 1)}, \psi^\alpha_{(i, 2)})\}.$$
Thus, letting, for $i \in \mathcal{U}$, $\bar{b}^\alpha_i :=(\bar{c}^{\alpha}_i)^\frown(\bar{a}^\alpha_{{(i, 1)}})^\frown(\bar{a}^\alpha_{(i, 2)})$, which has length $\gamma_* = \gamma + \kappa + \kappa$, we have $(\bar{b}^\alpha_i : i \in \mathcal{U})$ exemplifies the syntactic $(\xi, \gamma_*, \Delta^+)$-order property. To see this, let $\psi_1 = \psi_{\mathfrak{m}_1}$ and $\psi_2 = \psi_{\mathfrak{m}_2}$. Notice now that
	\begin{enumerate}[$(\star_{4.1})$]
	\item If $j \leq i$ and $i, j \in \mathcal{U}$, then $N \models \delta^1_{(\mathfrak{m}_1, \mathfrak{m}_2)}(\bar{b}_i, \bar{b}_j)$.
	\end{enumerate}
[Why? Note now the following.
\begin{enumerate}[$(\cdot_1)$]
\item If $\ell = 1, 2$, then $N \models \psi_{\mathfrak{m}_\ell}(\bar{c}^\alpha_i, \bar{a}^\alpha_{(j, \ell)})$.
\newline [This is by $(\star_{2})$(f).]
\end{enumerate}
\begin{enumerate}[$(\cdot_2)$]
\item If $\ell = 1$, then $N \models \psi_{\mathfrak{m}_\ell}((\bar{b}_{i}(\beta) : \beta < \gamma), (\bar{b}_{j}(\gamma + \beta) : \beta < \kappa))$, 
\newline [This is by $(\cdot_1)$ and the choice of $\bar{b}_i$ and $\bar{b}_j$.]
\end{enumerate}
\begin{enumerate}[$(\cdot_3)$]
\item If $\ell = 2$, then $N \models \psi_{\mathfrak{m}_\ell}((\bar{b}_{i}(\beta) : \beta < \gamma), (\bar{b}_{j}(\gamma + \kappa + \beta) : \beta < \kappa))$.
\newline [This is by $(\cdot_1)$ and the choice of $\bar{b}_i$ and $\bar{b}_j$.]
\end{enumerate}
\begin{enumerate}[$(\cdot_4)$]
\item $N \models \delta^1_{(\mathfrak{m}_1, \mathfrak{m}_2)}(\bar{b}_i, \bar{b}_j)$.
\newline [This is by the definition of $\delta^1_{(\mathfrak{m}_1, \mathfrak{m}_2)}$ and $(\cdot_2)$, $(\cdot_3)$.]
\end{enumerate}
So $(\star_{4.1})$ holds indeed.]
	\begin{enumerate}[$(\star_{4.2})$]
	\item If $j < i$ and $i, j \in \mathcal{U}$, then $N \models \delta^2_{(\mathfrak{m}_1, \mathfrak{m}_2)}(\bar{b}_j, \bar{b}_i)$, i.e., $N \models \neg \delta^1_{(\mathfrak{m}_1, \mathfrak{m}_2)}(\bar{b}_j, \bar{b}_i)$.
	\end{enumerate}
[Why? Toward contradiction assume that $N \models \delta^1_{(\mathfrak{m}_1, \mathfrak{m}_2)}(\bar{b}_j, \bar{b}_i)$.
\begin{enumerate}[$(\cdot_1)$]
\item for $\ell = 1, 2$, $N \models \psi_{\mathfrak{m}_\ell}(\bar{c}^\alpha_j, \bar{a}^\alpha_{(i, 1)}) \leftrightarrow \psi^\alpha_{(j, \ell)}(\bar{c}^\alpha_j, \bar{a}^\alpha_{(i, 2)})$.
\newline [This is by $(\star_2)$(g).]
\end{enumerate}
\begin{enumerate}[$(\cdot_2)$]
\item for $\ell = 1, 2$, we have
\begin{align*}
\quad &N \models \psi_{\mathfrak{m}_\ell}((\bar{b}_{j}(\beta) : \beta < \gamma), (\bar{b}_{i}(\gamma + \beta) : \beta < \kappa)) \leftrightarrow \\
& \psi_{\mathfrak{m}_\ell}(((\bar{b}_{j}(\beta) : \beta < \gamma), (\bar{b}_{i}(\gamma + \kappa + \beta) : \beta < \kappa)))
\end{align*}
\newline [This is by $(\cdot_1)$ and the choice of $\bar{b}_i$ and $\bar{b}_j$.]
\end{enumerate}
\begin{enumerate}[$(\cdot_3)$]
\item $N \models \psi_{\mathfrak{m}_1}((\bar{b}_{j}(\beta) : \beta < \gamma), (\bar{b}_{i}(\gamma + \beta) : \beta < \kappa))$.
\newline [This is by our assumption toward contradiction and the definition of $\delta^1_{(\mathfrak{m}_1, \mathfrak{m}_2)}$.]
\end{enumerate}
\begin{enumerate}[$(\cdot_4)$]
\item $N \models \psi_{\mathfrak{m}_1}(((\bar{b}_{j}(\beta) : \beta < \gamma), (\bar{b}_{i}(\gamma + \kappa + \beta) : \beta < \kappa)))$.
\newline [This is by $(\cdot_2)$ and $(\cdot_3)$.]
\end{enumerate}
\begin{enumerate}[$(\cdot_5)$]
\item $N \models \neg \psi_{\mathfrak{m}_2}(((\bar{b}_{j}(\beta) : \beta < \gamma), (\bar{b}_{i}(\gamma + \kappa + \beta) : \beta < \kappa)))$.
\newline [By $(\cdot_4)$ and $\psi_{\mathfrak{m}_1}$, $\psi_{\mathfrak{m}_2}$ being contradictory.]
\end{enumerate}
But $(\cdot_5)$ contradicts our assumption toward contradiction, so $(\star_{4.2})$ holds indeed.]
	\begin{enumerate}[$(\star_{4.3})$]
	\item $\delta^1_{(\mathfrak{m}_1, \mathfrak{m}_2)}$ and $\delta^2_{(\mathfrak{m}_1, \mathfrak{m}_2)}$ are contradictory.
	\end{enumerate}
[Why? By thee choice of $\delta^1_{(\mathfrak{m}_1, \mathfrak{m}_2)}$ and $\delta^2_{(\mathfrak{m}_1, \mathfrak{m}_2)}$.]
\newline Together $(\star_{4.1})$-$(\star_{4.3})$ establish $(\star_4)$, so this ends the proof of $(\star_4)$.
\begin{enumerate}[$(\star_5)$]
	\item Thus we have that for every $\alpha < \mu^+$ we have that $i(\alpha) < \xi$.
\end{enumerate}
\begin{enumerate}[$(\star_6)$]
\item For some $\alpha_* < \mu^+$, $|\mathcal{V}| = \mu^+$, where:
	 $$\mathcal{V} = \{\beta < \mu^+ : i(\beta) = i(\alpha_*) \text{ and } \forall i < i(\alpha_*)\forall \ell \in \{1, 2\}, \, \bar{c}^{\alpha_*}_i = \bar{c}^\beta_i, \psi^\beta_{(i, \ell)} = \psi^{\alpha_*}_{(i, \ell)} \}.$$
\end{enumerate}
[Why? As the number of possible sequences
$$(i(\alpha), (\psi^\alpha_{(i, 1)}, \psi^\alpha_{(i, 1)}) : i < i(\alpha)), ((\bar{a}^\alpha_{(i, 1)}, \bar{a}^\alpha_{(i, 2)}, \bar{c}^\alpha_i) : i < i(\alpha))$$
is $\leq (\xi \times |\Delta| \times |\Delta| \times |M|^{\kappa} \times |M|^{\kappa} \times |M|^\gamma)^{< \xi} \leq (\mu^\kappa)^{< \xi} = \mu^{< \xi} = \mu$.]
\begin{enumerate}[$(\star_7)$]
	\item 
	\begin{enumerate}[(a)]
	\item The set $\{\mrm{tp}_\Delta(\bar{c}_\alpha, M, N) : \alpha \in \mathcal{V}\}$ has size $\leq 2^{2^{|i(\alpha_*)| + \kappa + |\Delta|}}$.
	\item as $|\mathcal{V}| = \mu^+$ and $2^{2^{|i(\alpha_*)| + \kappa + |\Delta|}}$ is $\leq \mu$ we get a contradiction. 
	\end{enumerate}
\end{enumerate}
Why $(\star_7)$? It suffices to prove $(\star_7)$(a). To this extent, let 
$$\mathbf{I} = \{\bar{b} : \bar{b} \in M^\kappa \text{ and } M \restriction \mrm{ran}(\bar{b}) \preccurlyeq_{\mathfrak{K}} M\}.$$
We then define
%
$$E = \{(\bar{b}_1, \bar{b}_2) \in \mathbf{I} \times \mathbf{I} : \text{ if } \ell =1, 2; j < i(\alpha_*) \text{ then } N \models \psi^{\alpha_*}_{(j, \ell)}(\bar{c}^{\alpha_*}_j, \bar{b}_1) \leftrightarrow \psi^{\alpha_*}_{(j, \ell)}(\bar{c}^{\alpha_*}_j, \bar{b}_2) \}.$$
Next we have that:
\begin{enumerate}[$(\star_{7.1})$]
	\item
	\begin{enumerate}[(a)]
	\item $E$ is an equivalence relation;
	\item $E$ has $\leq 2^{2 \times i(\alpha_*)}$ equivalence classes;
	\item $2 \times i(\alpha_*) < \xi$ (recalling the assumptions on $\xi$);
	\item if $\bar{b}_1 E \bar{b}_2$, $\psi(\bar{z}_\gamma, \bar{x}_\kappa) \in \Delta$ and $\alpha \in \mathcal{V}$, then $N \models \psi(\bar{c}_\alpha, \bar{b}_1) \leftrightarrow \psi(\bar{c}_\alpha, \bar{b}_2)$.
	\item for each $\alpha \in \mathcal{V}$, let
	$$\mathbf{Y}_\alpha = \{(Y, \psi) : Y \text{ is an $E$-equivalence class}, \psi \in \Delta \text{ and } \bar{b} \in Y \Rightarrow N \models \psi(\bar{c}_\alpha, \bar{b})\};$$
	\item if $\alpha \neq \beta \in \mathcal{V}$, then $\mathbf{Y}_\alpha \neq \mathbf{Y}_\beta$;
	\item $|\{\mathbf{Y}_\alpha : \alpha \in \mathcal{V}\}| \leq 2^{|M^\kappa/E| \cdot |\Delta|}$;
	\item if $\alpha < \beta < \mu^+$, and $\mathbf{Y}_\alpha = \mathbf{Y}_\beta$, then $\mrm{tp}_\Delta(\bar{c}_\alpha/M; N) = \mrm{tp}_\Delta(\bar{c}_\beta/M; N)$.
	\end{enumerate}
\end{enumerate}
We prove $(\star_{7.1})$. The items needing proofs are (d), (f) and (g). Concerning item (d), if not then, we have that for some $\alpha \in \mathcal{V}$, $\psi \in \Delta$ and $\bar{b}_1, \bar{b}_2 \in \mathbf{I}$, which are $E$-equivalent, we have that $N$ does not satisfy $\psi(\bar{c}_\alpha, \bar{b}_1) \leftrightarrow  \psi(\bar{c}_\alpha, \bar{b}_2)$, so we have
$$N \models \psi(\bar{c}_\alpha, \bar{b}_1) \wedge \neg \psi(\bar{c}_\alpha, \bar{b}_2) \text{ or } N \models \neg \psi(\bar{c}_\alpha, \bar{b}_1) \wedge \psi(\bar{c}_\alpha, \bar{b}_2).$$
As we can interchange the role of $\bar{b}_1$ and $\bar{b}_2$, w.l.o.g. we have that
$$N \models \psi(\bar{c}_\alpha, \bar{b}_1) \wedge \neg \psi(\bar{c}_\alpha, \bar{b}_2).$$
Now, as $\psi \in \Delta$ there is $\mathfrak{m}_1 \in \mathcal{W}^{\mrm{large}}_{(\mathfrak{K}, \kappa, \gamma)}$ such that $\psi = \psi_{\mathfrak{m}_1}$. Since $\bar{b}_2 \in \mathbf{I}$ we can find $\mathfrak{m}_2 \in \mathcal{W}^{\mrm{large}}_{(\mathfrak{K}, \kappa, \gamma)}$ such that $N \models \psi_{\mathfrak{m}_2}(\bar{c}_\alpha, \bar{b}_2)$, and obviously $\psi_{\mathfrak{m}_1}$ and $\psi_{\mathfrak{m}_2}$ are contradictory, so we get a contradiction to $i(\alpha) = i(\alpha_*)$, we elaborate on this last passage. To this extent, let
$$p(\bar{z}_\gamma) = \{\psi_{\mathfrak{m}_1}(\bar{z}_\gamma, \bar{b}_1), \psi_{\mathfrak{m}_1}(\bar{z}_\gamma, \bar{b}_2)\} \cup \{\psi^\alpha_{(j, \ell)}(\bar{z}_\gamma, \bar{a}^\alpha_{(j, \ell)}) : \ell \in \{ 1, 2\}, \, j < i(\alpha_*)\}.$$
Clearly, $p(\bar{z}_\gamma)$ is a subset of $\mrm{tp}_\Delta(\bar{c}_\alpha/M; N)$ of size $< \xi$, hence by $(\star_1)$, there is $\bar{c}_* \in M^\gamma$ realizing it. So in the inductive choice in $(\star_2)$ we could have chosen, for our $\alpha$ and $i = i(\alpha)$ from $(\star_3)$, $(\bar{a}^\alpha_{(i, 1)}, \bar{a}^\alpha_{(i, 2)}, \bar{c}^\alpha_i, \mathfrak{m}^\alpha_{(i, 1)}, \mathfrak{m}^\alpha_{(i, 2)},  \psi^\alpha_{(i, 1)}, \psi^\alpha_{(i, 2)})$ as follows:
\begin{enumerate}[$(\cdot)$]
\item $\bar{a}^\alpha_{(i, 1)} = \bar{b}_1$;
\item $\bar{a}^\alpha_{(i, 1)} = \bar{b}_2$;
\item $\bar{c}^\alpha_{i} = \bar{c}_*$;
\item $\mathfrak{m}^\alpha_{(i, 1)} = \mathfrak{m}_1$;
\item $\mathfrak{m}^\alpha_{(i, 2)} = \mathfrak{m}_2$;
\item $\psi^\alpha_{(i, 1)} = \psi_{\mathfrak{m}_1}$;
\item $\psi^\alpha_{(i, 2)} = \psi_{\mathfrak{m}_2}$.
\end{enumerate}
But this contradicts the choice of $i(\alpha) = i(\alpha_*)$. This proves item (d). 

\smallskip \noindent
Concerning item (f), let $\alpha \neq \beta \in \mathcal{V}$, so $\mrm{tp}_\Delta(\bar{c}_{\alpha}/M; N) \neq \mrm{tp}_\Delta(\bar{c}_{\beta}/M; N)$, by $(\star_{0})$. So there are $\psi(\bar{y}_\gamma, \bar{x}_\kappa) \in \Delta$ and $\bar{b} \in \mathbf{I}$ such that 
$$\psi(\bar{y}_\gamma, \bar{b}) \in \mrm{tp}_\Delta(\bar{c}_\alpha/M; N) \;\; \Leftrightarrow \;\; \psi(\bar{y}_\gamma, \bar{b}) \in \mrm{tp}_\Delta(\bar{c}_\beta/M; N).$$
As we can interchange the role of $\alpha$ and $\beta$, w.l.o.g. we have
$$N \models \psi(\bar{c}_\alpha, \bar{b}) \wedge \neg \psi(\bar{c}_\beta, \bar{b}).$$
Now, as $\psi \in \Delta$ there is $\mathfrak{m}_1 \in \mathcal{W}^{\mrm{large}}_{(\mathfrak{K}, \kappa, \gamma)}$ such that $\psi = \psi_{\mathfrak{m}_1}$. Also, as $\bar{b} \in \mathbf{I}$, we can find $\mathfrak{m}_2 \in \mathcal{W}^{\mrm{large}}_{(\mathfrak{K}, \kappa, \gamma)}$ such that $N \models \psi_{\mathfrak{m}_2}(\bar{c}_\alpha, \bar{b})$ and obviously $\psi_{\mathfrak{m}_1}$ and $\psi_{\mathfrak{m}_2}$ are contradictory. By the definition $\mathbf{Y}_\alpha$, we have $(\bar{b}/E, \psi_{\mathfrak{m}_1}) \in \mathbf{Y}_\alpha$ and similarly $(\bar{b}/E, \psi_{\mathfrak{m}_2}) \in \mathbf{Y}_\beta$, so necessarily $\mathbf{Y}_\alpha \neq \mathbf{Y}_\beta$, as promised. This proves item (f).

\smallskip \noindent Finally, concerning item (g), notice that $\mathbf{Y}_\alpha$ is a subset of 
$$Z = \{(\bar{b}/E, \psi) : \bar{b} \in \mathbf{I}, \, \psi \in \Delta\},$$
hence $\{\mathbf{Y}_\alpha : \alpha < \mu^+\} \subseteq \mathcal{P}(Z)$, so 
$$|\{\mathbf{Y}_\alpha : \alpha < \mu^+\}| \leq 2^{|Z|}.$$
Notice now that $|Z| = |\{\bar{b}/E : \bar{b} \in \mathbf{I} \}| \cdot |\Delta| \leq |M^\kappa/E| \cdot |\Delta|$, and so putting everything together we are done. This proves item (g). Lastly, item (h) follows.

\smallskip \noindent Thus, we have proved $(\star_{7.1})$, to conclude just notice that items (h) and (g) of $(\star_{7.1})$ imply $(\star_{7})$(a), and, as already observed, that leads to a contradiction.
\end{proof}

	\begin{remark} The proof of \ref{almost_stability_claim} is similar to \cite[Chapter I, Th. 1.10, pg. 277]{300}.
\end{remark}

	\begin{proof}[Proof of \ref{stability_th}] This follows from \ref{equivalence_syntactic}, \ref{almost_stability_claim}, and \cite[3.3]{977}, since it easily follows from \cite[3.3]{977} that the syntactic order property stated in \ref{almost_stability_claim} fails for any $\mrm{AEC}$ of $R$-modules.
\end{proof}

%
%

\section{A counterexample to Mazari-Armida's question}

	\begin{notation}\label{not_RP_module} Let $\mathcal{P}$ be a set of primes. We denote by $R_{\mathcal{P}}$ the sub-ring $\mathbb{Z}[\frac{1}{p} : p \in \mathcal{P}]$ of $\mathbb{Q}$. For $\mathcal{P} = \{p\}$ we may write $R_p$ also as $\mathbb{Z}[\frac{1}{p}]$.
\end{notation}

	\begin{definition}\label{def_P_torsion} Let $\mathcal{P}$ be a set of primes and $G \in \mrm{AB}$. We say that $G$ is $\mathcal{P}$-torsion when $G$ is torsion and, for any prime $p$, if $px = 0$ and $x \neq 0$, then $p \in \mathcal{P}$. 
	Given an abelian group $G$ and a prime $p$, we write $p_\ell^\infty \vert \, a$ to we mean that $p_\ell^n \vert \, a$ for all $n < \omega$.
\end{definition}

	\begin{proof}[Proof of Theorem~\ref{counterexample_th}] Let $\bar{p} = (p_1, ..., p_5)$ be distinct primes. 
		\begin{enumerate}[$(\star_1)$] 
	\item Fix $\lambda$ infinite and let $G_\lambda = \bigoplus \{R_{p_1} x_\alpha : \alpha < \lambda \}$ (recall Notation~\ref{not_RP_module}).
	\end{enumerate}
	\begin{enumerate}[$(\star_2)$] 
	\item For every $\mathcal{U} \subseteq \lambda$ we define $G^*_{\mathcal{U}} \in \mrm{TFAB}$ as follows:
	\begin{enumerate}[(a)]
	\item $G^0_{\mathcal{U}} = G_\lambda \oplus N \oplus H$, where:
	$$N = R_{p_2}y \text{ and } H = \bigoplus \{\mathbb{Z} z_\alpha: \alpha < \lambda\};$$
	\item $G^1_{\mathcal{U}} = \mathbb{Q}G_\lambda \oplus \mathbb{Q}N \oplus \mathbb{Q}H$;
	\item $G^*_{\mathcal{U}}$ is the subgroup of $G^1_{\mathcal{U}}$ generated by:
	\begin{enumerate}
	\item $p^{-n}_1 x_\alpha$, $\alpha < \lambda$, $n < \omega$;
	\item $p^{-n}_2 y$, $\alpha < \lambda$;
	\item $p^{-n}_3 z_\alpha$, $\alpha < \lambda$, $n < \omega$;
	\item $p^{-n}_4 (x_\alpha + z_\alpha)$, $\alpha \in \mathcal{U}$, $n < \omega$;
	\item $p^{-n}_5 (x_\alpha + y + z_\alpha)$, $\alpha \in \mathcal{U}$, $n < \omega$.
	\end{enumerate}
	\end{enumerate}
	\end{enumerate}
	\begin{enumerate}[$(\star_3)$] 
	\item We define $\mathbf{K} = \mathbf{K}(\bar{p})$ as the class of $G$ such that:
	\begin{enumerate}[(a)]
	\item $G$ is a torsion-free abelian group;
	\item for $\ell \in \{1, ..., 5\}$, we define $G[p_\ell] = \{a \in G : p_\ell^\infty \vert \, a\}$;
	\item $G[p_1]$ is an $\aleph_1$-free $R_{p_1}$-module (recall Notation~\ref{not_RP_module}), where we recall that $\aleph_1$-free means that every countable submodule is free;
	\item if $G \neq G[p_1]$, \underline{then} for some $a_\star$ we have:
	\begin{enumerate}[$(\cdot_1)$]
	\item $a_\star \in G[p_2] \setminus \{0 \}$;
	\item $G[p_2] = \langle a_\star \rangle^*_{G} \cong R_{p_2}a_\star$ (where $\langle a_\star \rangle^*_{G}$ denotes pure closure in $G$);
	\item inside $G$ the group $C :=G[p_1] \oplus G[p_2] \oplus G[p_3]$ is well-defined;
	\item $G/C$ is $\{p_4, p_5\}$-torsion;
	\item for some partial embedding $h$ from $G[p_1]$ onto $G[p_3]$ we have:
	$$\{(x, h(x)) : x \in \mrm{dom}(h)\} = \{(x, z) : x \in G[p_1], \, z \in G[p_3], \, p_4^\infty \vert \, (x + z)\}$$
	$$G[p_4] = \{(x, h(x)) : x \in \mrm{dom}(h))\};$$
	\item for $H_1 = \{x \in G[p_1] : \exists z \in G[p_3] \text{ such that } p_4^\infty \vert \, (x+z)\}$ we have:
	\begin{enumerate}[(i)]
	\item $H_1$ is an $\aleph_1$-free $R_{p_1}$-module;
	\item $G[p_1]/H_1$ is an $\aleph_1$-free $R_{p_1}$-module;
	\end{enumerate}
	\item for $H_3 = \{z \in G[p_3] : \exists x \in G[p_1] \text{ such that } p_4^\infty \vert \, (x+z)\}$ we have:
	\begin{enumerate}[(i)]
	\item $H_3$ is an $\aleph_1$-free $R_{p_1}$-module;
	\item $G[p_3]/H_3$ is an $\aleph_1$-free $R_{p_1}$-module;
	\end{enumerate}
	\item $G[p_5]$ is equal to $A$, where:
	$$A = \langle \{r(x + a_\star + h(x)) : r \in R_{p_5}, x \in H_1\} \rangle_G.$$
	\end{enumerate}
	\end{enumerate}
\end{enumerate}
\begin{enumerate}[$(\star_4)$] 
	\item $(\mathbf{K}, \leq_{\mrm{pure}})$ is an $\mrm{AEC}$.
\end{enumerate}
[Why? The coherence and unions of chains axioms follow from the fact that the class is axiomatized by purity conditions, which are preserved under unions of chains of pure subgroups and substructures. The L\"owenheim-Skolem condition holds because all the relevant existence demands involve countable objects, so we have an AEC.]
	\begin{enumerate}[$(\star_5)$] 
	\item $G_\lambda \in \mathbf{K}$ and, for every $\mathcal{U} \subseteq \lambda$, $G^*_{\mathcal{U}} \in \mathbf{K}$ and $G_\lambda \leq_{\mrm{p}} G^*_{\mathcal{U}}$.
\end{enumerate}
[Why? Easy.]
\begin{enumerate}[$(\star_6)$] 
	\item For $\mathcal{U} \subseteq \lambda$, let $t_{\mathcal{U}} = \gtp(y/G_\lambda; G^*_{\mathcal{U}})$.
\end{enumerate}
\begin{enumerate}[$(\star_7)$] 
	\item For $\mathcal{U} \neq \mathcal{V} \subseteq \lambda$, $t_{\mathcal{U}} \neq t_{\mathcal{V}}$.
\end{enumerate}
\begin{enumerate}[$(\star_8)$] 
	\item $(\mathbf{K}, \leq_{\mrm{pure}})$ is not $\lambda$-stable, for every $\lambda$.
\end{enumerate}
[Why? Follows from $(\star_5)$ and $(\star_7)$.]
\end{proof}

\end{document}